\newtheorem{theorem}{Theorem}[section]
\newtheorem{question}{Question}
\newtheorem{proposition}[theorem]{Proposition}
\newtheorem{corollary}[theorem]{Corollary}
\newtheorem{lemma}[theorem]{Lemma}
\theoremstyle{remark}
\newtheorem{remark}[theorem]{\bf Remark}
\mathchardef\mhyphen="2D
\newcommand{\inv}{{}^{-1}}
\newcommand{\isomor}{\,\raisebox{4pt}{$\sim$}{\kern -.89em\to}\,}
\newcommand{\PP}{\mathbb P}
\newcommand{\hh}{\mathfrak h}
\newcommand{\frg}{\mathfrak g}
\newcommand{\bb}{\mathfrak b}
\newcommand{\CC}{\mathbb C}
\newcommand{\RR}{\mathbb R}
\newcommand{\ZZ}{\mathbb Z}
\begin{document}
	
		\title{On universal subspaces for Lie groups}
	\author{Saurav Bhaumik and Arunava Mandal}

\maketitle
	
	\begin{abstract}
		 Let $U$ be a finite dimentional vector space over $\mathbb R$ or $\mathbb C$, and let $\rho:G\to GL(U)$
		 be a representation of a connected Lie group $G$. A linear subspace $V\subset U$ is called universal if every orbit of $G$ meets $V$. We study universal subspaces for Lie groups, especially compact Lie groups. 
		 Jinpeng and Dokovi\'{c} approached universality for compact groups through a certain topological obstruction. They showed that the non-vanishing of the obstruction class is sufficient for the universality of $V$, and asked whether it is also necessary under certain conditions. 
		 In this article, we show that the answer to the question is negative in general, but we discuss some important situations where the answer is positive. We show that if $G$ is solvable and $\rho:G\to GL(U)$ is a complex representation, then the only universal complex subspace is $U$ itself. We also investigate the question of universality for a Levi subgroup.
                 \end{abstract}
	
	 {\it Keywords}: {Compact Lie groups. maximal rank Lie subalgebra,  Chern class, Eular class, Eular characteristics.}

 {\it Subjclass} [2010] {Primary: 22E46, 57R20, Secondary: 22C05, 57T15}

\section{Introduction}

Let $G$ be a connected Lie group and let $\rho:G\to GL(U)$
 be a finite dimensional (real or complex) representation of $G$. A linear subspace $V\subset U$ is called \emph{universal in $U$} (Jinpeng and Dokovi\'{c} \cite{A-D1}) if for every $u\in U$, the $G$ orbit through $u$ intersects $V$ i.e. $\rho(g)(u)\in V$ for some $g\in G$.
A classical example is Schur's triangularization theorem, which says that the complex vector space of upper triangular matrices is universal for the complexified adjoint representation of $U(n)$. For the complexified adjoint representation of a semisimple compact connected Lie group $G$ on its complexified Lie algebra $\frg\otimes_{\mathbb R} \CC$ (where $\frg$ is a Lie algebra of $G$), the sum of root spaces is universal. An important class of examples for universal subspace arises from the work of Jorge Galindo, Pierre de la Harpe, and Thierry Vust \cite{G-H-T}, who investigated how the irreducibility of a representation of connected complex Lie group forces certain other geometrical properties of the orbits. They showed that any complex hyperplane in a finite-dimensional complex irreducible representation of a reductive connected complex Lie group is universal. Gichev \cite{G} showed that the same is true for compact connected Lie groups.  For more examples of universal subspace, we refer to \cite{A-D1}. The availability of a large number of examples for `universal subspace' and its applications in matrix theory (see \cite{A-D1}, \cite{A-D2}, \cite{D-T}, and the references cited therein) is one of the primary motivations to study this topic.

One of the main themes in \cite{A-D1} is to find necessary and sufficient conditions for a linear subspace to be universal, in terms of  certain topological obstructions. They proved that the nonvanishing of that obstruction class is sufficient for the universality of the subspace in general. They posed a question (on p.19 of \cite{A-D1}) whether the nonvanishing of the obstruction class in a certain specific situation is necessary for the subspace to be universal. In our note we provide an example (rather a class of examples) to show that the answer is negative in general. However, we discuss some important situations where the answer is indeed positive. A key observation is that if $G$ is a connected Lie group and if $\hh\subset \frg$ is a Lie subalgebra that is universal for the adjoint representation, then $\hh$ is the Lie algebra of a \emph{closed} subgroup $H\subset G$ (Proposition \ref{universal-closed}). Apart from this, we investigate universality for possibly noncompact solvable Lie groups and Levi subgroups of connected Lie groups.

We recall from \cite{A-D1} the construction of the topological obstruction. Given a representation $\rho:G\to GL(U)$ and a linear subspace $V\subset U$, let $G_V$ be the closed subgroup of $g\in G$ such that $\rho(g) (V)\subset V$. Consider $W=U/V$. Then $U$, $V$ and $W$ are representations of $G_V$. Since $G\to G/G_V=:M$ is a principal $G_V$-bundle, there are the three associated $G$-equivariant vector bundles $E_U,E_V$ and $E_W$ associated to $U,V$ and $W$, respectively. There is a $G$-equivariant isomorphism of vector bundles $E_U\isomor M\times U: [(g,u)]\mapsto ([g],gu)$. Every vector $u\in U$, viewed as a constant section of $E_U$, induces a section $s_u$ of $E_W=E_U/E_V$. Then $V$ is universal if and only if for every $u\in U$, the constant section $s_u$ has a zero (cf. \cite{A-D1}). If $E_W$ is orientable, then it has the Euler class $e(E_W)$. If this Euler class is nonzero, there is no nowhere vanishing section of $E_W$, so $V$ is universal. In case $U$ is a complex representation and $V$ is a complex linear subspace, $E_W$ is a complex vector bundle, and its top Chern class is the same as the Euler class of the underlying real vector bundle.

Let $T$ be a maximal torus of $G$. In case $V$ is $T$-invariant or equivalently $T\subset G_V$, consider the vector bundle $E_W$ over $G/T$. If $V$ is universal, then ${\rm dim}_\RR V\geq {\rm dim}_\RR U - {\rm dim}_\RR G/T$. A $T$-invariant subspace $V$ is said to have \emph{optimal dimension} if ${\rm dim}_\RR V= {\rm dim}_\RR U - {\rm dim}_\RR G/T$. In this case, the Euler class of $E_W$ over $G/T$ lives in the top dimensional cohomology, hence gives rise to a number $C_V=\langle e(E_W),[G/T]\rangle$ via Poincar\'e duality, where $[G/T]$ is the fundamental class of $G/T$. If $C_V$ is nonzero then $V$ is universal in $U$ (Theorem 4.2 of \cite{A-D1}). Since there is a description of the cohomology $H^*(G/T,\ZZ)$, in many cases $C_V$ can be calculated and shown to be nonzero, hence in those cases $V$ is universal (\S 5 of \cite{A-D1}). The converse holds only  under additional condition (Theorem 4.4 of \cite{A-D1}) and fails in general (\S 6 of \cite{A-D1}).

 If $V$ is universal, then ${\rm dim}_\RR V\geq {\rm dim}_\RR U - {\rm dim}_\RR G/G_V$, where only real dimensions are taken (Lemma 4.1 of \cite{A-D1}). Now suppose \begin{equation}\label{dimension}{\rm rank}_\RR (W)={\rm dim}_\RR(G/G_V)=2r.\end{equation} Suppose $E_W$ is orientable. Then $E_W$ has a nowhere vanishing section if and only if the Euler class $e(E_W)\in H^{2r}(G/G_V,\ZZ)$ is zero (Corollary 14.4 of \cite{B}). Jinpeng and Dokovi\'{c} considered an example where $V$ is universal, the Euler class of $E_W$ vanishes over $G/T$ but not over $G/G_V$. They asked the following question in \cite{A-D1}.

	\begin{question}\label{question} Suppose ${\rm rank}_\RR(W)={\rm dim}(M)$ and $T\subset G_V$. Does the universality of $V$ imply that certain obstruction class of $E_W$ over $G/G_V$, for the existence of nowhere vanishing sections, is nonzero?
\end{question}

The two conditions on the rank and $T$-invariance of $V$ are necessary, as they show by examples where the universality of $V$ does not imply the vanishing of certain obstruction class of $E_W$ if either of the above conditions is violated (cf. \S 6 of \cite{A-D1}).
In this context, we investigate the topic further and explore the relationship between certain topological obstructions and universality.

We show that in general, the answer to the above question is negative (see \S 3). When the rank of the vector bundle is equal to the dimension of the base, the Euler class is the final obstruction to the existence of a nowhere vanishing section. We give an example where $V$ is universal, the two conditions in the question hold, yet the Euler class of $E_W$ over $G/G_V$ vanishes.

However, we show that for the following three important classes of examples the universality of $V$ is equivalent to the nonvanishing of the Euler class.

If $G$ is a complex connected Lie group, and $\rho:G\to GL(U)$ is a holomorphic representation of $G$ such that $G/G_V$ is compact, then under the assumption (\ref{dimension}), $V$ is universal if and only if the top Chern class of $E_W$ is nonzero (Theorem \ref{holomorphic-case-1}).

Let $H$ be a closed Lie subgroup of a compact connected Lie group $G$, and we denote the Lie algebra of $H$ by $\hh$.
Consider $U=\frg$, $V=\hh$, and $W=\frg/\hh$. Then the bundle $E_W$ is the tangent bundle $T(G/H)$. We prove using a result of Hopf and Samelson (cf. \cite{H-S}) that the universality of $\hh$ is equivalent to the nonvanishing of the obstruction class of $E_W$ (Theorem \ref{tangent-bundle-1}). An analogous result holds for the complexified adjoint representation. We prove that if $\hh$ is a complex Lie subalgebra of the complexified Lie algebra $\frg_\mathbb C$, then the following three are equivalent (Theorem \ref{complexified-1}):  (i) the top Chern class of the associated bundle $E_W$ is nonzero, (ii) $\hh$ contains a Borel subalgebra, and (iii) $\hh$ is universal in $\frg_\mathbb C$ under the complexified adjoint representation. This is in some sense a partial converse to Schur's triangularization.

Then we study all the irreducible representations of $SU(2)$, and consider all the $T$-invariant subspaces $V$ which satisfy the dimension requirement (\ref{dimension}). We give a complete description of these subspaces, show that they are all universal, and give a necessary and sufficient condition for the Euler class of $E_W$ to vanish (Proposition \ref{SU(2)}). A byproduct of this analysis is that we get an infinite family of counterexamples to Question \ref{question}.

Let $G$ be a connected solvable Lie group. We prove that if $\rho:G\to GL(U)$ is a complex linear representation, then the only complex linear subspace $V\subset U$ that is universal in $U$, is $U$ itself (Proposition \ref{solvable}).

Let $G$ be a compact connected Lie group, $\rho:G\to GL(U)$ a complex linear representation, and $V\subset U$ a complex linear universal subspace. We show that if $S\subset G$ is a Levi subgroup, then $V$ is universal for $S$ also (Proposition \ref{compact-levi}). But the same is no longer true if we take real representations for $G$ or if $G$ is not compact (Remark \ref{solv-remark}).

\section{Some important cases of universal subspaces}
In this section, we discuss three important classes of examples where the universality is equivalent to the nonvanishing of the obstruction class for a certain vector bundle.

Let $p:E\to M$ be a smooth real oriented vector bundle of rank $r$ over a base space $M$ which is a CW-complex. Then $E$ has the Euler class $e(E)\in H^r(M,\ZZ)$. If the rank $r$ is equal to the dimension of the base $M$, then $E$ admits a nowhere vanishing section if and only if the Euler class vanishes (VII Corollary 14.4 on p. 514 of Bredon \cite{B}). If $E$ is a complex vector bundle of rank $n$, then the top Chern class $c_n(E)\in H^{2n}(M,\ZZ)$ is the same as the Euler class of the underlying real vector bundle. 

\subsection{Holomorphic case}

Suppose $G$ is a complex Lie group (not necessarily compact). Let $U$ be a complex vector space and let $\rho:G\to GL(U)$ be a holomorphic representation. Let $V$ be a complex linear subspace of $U$, and let $G_V=\{g\in G|~\rho(g)(v)\in V,~\forall v\in V\}$. Then $G_V$ is a closed holomorphic Lie subgroup and the quotient $G/G_V$ is a holomorphic manifold. We assume now that $G/G_V$ is compact and the dimension requirement (\ref{dimension}) holds. The Lie subalgebra $\frg_V=\{X\in \frg:d\rho(X)(V)\subset V\}$ is the Lie algebra of the connected component of $G_V$.

\begin{theorem} \label{holomorphic-case-1} Let the notation be as above. Then the following are equivalent.
	
	\begin{enumerate}
		\item If $e(E_W)\in H^{2r}(G/G_V,\ZZ)$ is the Euler class, and if $[G/G_V]\in H_{2r}(G/G_V,\ZZ)$ is the fundamental class, then $C_V=\langle e(E_W),[G/G_V]\rangle \ne 0.$
		
		\item $V$ is universal.
	\end{enumerate}
\end{theorem}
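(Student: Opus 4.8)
The plan is to establish the two implications separately. The direction $(1)\Rightarrow(2)$ is already available from the general theory recalled in the introduction: if $C_V=\langle e(E_W),[G/G_V]\rangle\ne 0$, then in particular the Euler class $e(E_W)\in H^{2r}(G/G_V,\ZZ)$ is nonzero, so by Corollary 14.4 of \cite{B} (valid since $\rk_\RR(W)=\dim_\RR(G/G_V)=2r$) the bundle $E_W$ admits no nowhere-vanishing section. Translating through the correspondence $u\mapsto s_u$ between vectors of $U$ and constant sections of $E_U$ projecting to sections of $E_W$, the nonexistence of a nowhere-vanishing section forces every $s_u$ to have a zero, which is exactly the statement that $V$ is universal. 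Note this direction does not even use compactness of $G/G_V$ or the holomorphic hypothesis beyond what is needed to form $E_W$; it is the converse that carries the real content.

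For $(2)\Rightarrow(1)$, I would argue by contrapositive: assuming $C_V=0$, I want to produce a vector $u\in U$ whose associated section $s_u$ never vanishes, contradicting universality. Since $\rk_\RR(W)=2r=\dim_\RR(G/G_V)$ and $E_W$ is oriented (it is complex), $C_V=0$ is equivalent to $e(E_W)=0$, hence by the same Corollary 14.4 there exists \emph{some} nowhere-vanishing smooth section $\sigma$ of $E_W$. The difficulty is that universality concerns only the \emph{constant} sections $s_u$ coming from actual vectors $u\in U$, whereas $\sigma$ is merely a smooth section; so a general nowhere-vanishing section is not immediately good enough. This is precisely the gap that the holomorphic and compactness hypotheses must close.

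The main obstacle, and the crux of the proof, will be upgrading the existence of a nowhere-vanishing smooth section to the existence of a nowhere-vanishing \emph{holomorphic} (equivalently, constant) section $s_u$. Here I would exploit that $E_W$ is a holomorphic vector bundle on the compact complex manifold $M=G/G_V$, and that the space of global holomorphic sections $H^0(M,E_W)$ receives the constant sections $u\mapsto s_u$ from $U$ (in fact this map $U\to H^0(M,E_W)$ is surjective, since $E_U\cong M\times U$ is trivial and $s_u$ is the image of the constant section $u$ under $E_U\twoheadrightarrow E_W$). The goal is to show that when $c_n(E_W)=e(E_W)=0$ the linear system $|E_W|$ generated by the $s_u$ is base-point-free, i.e.\ for every point of $M$ some $s_u$ is nonzero there; then that $u$ witnesses non-universality. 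The natural mechanism is that the top Chern class of a globally generated holomorphic vector bundle of rank equal to $\dim_\CC M$ computes, via a transversality/General-position argument, the (signed, but here on a complex manifold genuinely positive-counted) number of common zeros of a generic holomorphic section, so vanishing of $c_n(E_W)$ together with global generation yields a section with no zeros.

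Concretely, I expect the argument to run as follows. First show that $E_W$ is globally generated by the constant sections $s_u$: this follows from the $G$-equivariance and the fact that $E_U$ is trivialized with fiber $U$, so at any point the evaluation $U\to (E_W)_{\text{pt}}$ is surjective. Given global generation, choose $u$ generically so that $s_u$ is transverse to the zero section; its zero locus is then a complex-analytic cycle whose class is Poincar\'e dual to $c_n(E_W)$, and the count of its points (with their intrinsic positive complex orientations) equals $C_V=\langle e(E_W),[M]\rangle$. Since $C_V=0$ and complex intersection multiplicities are all positive, the zero locus must be empty, giving a nowhere-vanishing $s_u$ and hence non-universality. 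The one point demanding care is the passage from ``transverse generic section'' to ``the algebraic count $C_V$ is the honest number of zeros''; this is where compactness of $M$ and the holomorphicity (ensuring all local intersection multiplicities are strictly positive, so no cancellation can occur) are both essential, and it is the step I would write out most carefully.
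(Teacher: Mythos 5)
Your proposal is correct and takes essentially the same route as the paper: the paper proves $(2)\Rightarrow(1)$ by following An--Dokovi\'c's Theorem 4.4, observing that since $d\rho$ is complex linear the derivative $\psi_v:\frg/\frg_V\to W$ of $s_u$ at a zero satisfies $\det(\psi_v|_\RR)=|\det_\CC(\psi_v)|^2\ge 0$, so a generic transverse constant section has all local indices $+1$ and $C_V=0$ forces it to be zero-free. Your global-generation and positive-intersection-multiplicity formulation is exactly this mechanism in complex-geometric language, with the same use of compactness, orientability, and the dimension condition (\ref{dimension}).
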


\begin{proof}
	 Our argument follows the proof of Theorem 4.4 of \cite{A-D1}. We briefly recall some of the ingredients in their proof in our context. The Lie algebra map $d\rho:\frg\to \frg l(U)$ is complex linear. Let $W=U/V$, and consider the projection map $\pi_W:U\to W$. Given $v\in V$, the map \[\frg\to W: X(\in \frg)\mapsto -\pi_W(d\rho(X)v)\] is zero on the subspace $\frg_V$, so it indeces a complex linear map $\psi_v$ as in (4.1) of \cite{A-D1}:
	
	\[\psi_v:\frg/\frg_V\to W.\]

	However in our case, if $\psi_v|_\RR$ is the corresponding real linear map of the underlying real vector spaces, then its determinant is always nonnegative. Indeed, $det(\psi_v|_\RR)=|det(\psi_v)|^2\ge 0$. Therefore, for every $u\in U$, if $x\in G/G_V$ is a zero of the section $s_u$, then ${\rm ind}_x(s_u)={\rm sgn}(\psi_v)\ge 0$.
	
	We observe that the rest of the proof of Theorem 4.4 of \cite{A-D1} relies on three main assumptions: the orientability of $E_W$, the orientability and compactness of the base manifold, and the dimension requirement (\ref{dimension}). The same proof carries over to the situation where $E_W$ is complex, the base manifold $G/G_V$ is compact, holomorphic, and (\ref{dimension}) holds. 
\end{proof}

\subsection{Universal Lie subalgebras for the adjoint representation}

In this section, we examine Question \ref{question} for the Lie subalgebras in the adjoint representation of a compact connected Lie group. To be able to state our result regarding the obstruction class of a bundle on the corresponding homogeneous space,  we need to first observe that maximal rank subalgebras are always the Lie algebras of \emph{closed} Lie subgroups. This is an old result of Borel and de Siebenthal (cf. Theorem in \S 8 of \cite{B-S}). We give a different proof of this result based on our key observation (Proposition \ref{universal-closed}).

Let $G$ be a connected Lie group (not necessarily compact) and let $\frg$ be its associated Lie algebra.  Recall that if $\hh$ is a Lie subalgebra of $\frg$, then there is a connected Lie group $H$, an injective smooth group homomorphism $i:H\to G$ such that $di$ is injective and $di({\rm Lie} (H))=\hh$. This is sometimes referred to as a ``virtual Lie subgroup'', because the image $i(H)$ is not necessarily closed. For example, take the torus $G=U(1)\times U(1)$, and take $\hh$ to be the real line through the point $(1,\alpha)\in \RR^2={\rm Lie}(G)$, where $\alpha$ is irrational. It is well known that the image of the corresponding virtual Lie subgroup is dense in $G$. However, in Proposition \ref{universal-closed} we show that if the Lie subalgebra $\hh$ is \emph{universal} for the adjoint representation of $G$ then the Lie subgroup $H$ is necessarily closed. 

We denote by $N_G(H)$
 the normalizer of $H$ in $G$ and for a Lie subalgebra $\hh$ of $\frg={\rm Lie}(G)$, the normalizer of $\hh$ in $G$ is defined by $N_G(\hh):=\{g\in G|~ {\rm Ad}_g(\hh)\subset \hh\}$.

\begin{proposition}\label{universal-closed} Let $G$ be a connected Lie group and let $\hh\subset \frg$ be a universal subspace for the adjoint action of $G$. If $\hh$ is a Lie subalgebra of $\frg$, then $\hh$ is the Lie algebra of a \emph{closed} connected subgroup $H$ of $G$. Moreover, $H=N_G(\hh)^\circ=N_G(H)^\circ$. \end{proposition}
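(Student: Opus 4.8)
The plan is to show that the universal Lie subalgebra $\hh$ coincides with $\lie(N_G(\hh)^\circ)$, and then to deduce closedness from the fact that a normalizer is always a closed subgroup. The crucial input is the universality of $\hh$: for every $X\in\frg$ there is some $g\in G$ with $\ad_g(X)\in\hh$, equivalently $G\cdot\hh=\frg$ where $G$ acts on subsets of $\frg$ by the adjoint action.

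First I would identify $\lie(N_G(\hh))$ with $\nn_\frg(\hh)=\{Y\in\frg:[Y,\hh]\subset\hh\}$, the normalizer of $\hh$ inside $\frg$; this is standard. Since $\hh$ is a subalgebra, we have $\hh\subset\nn_\frg(\hh)$, and the key claim is that equality holds, i.e. $\nn_\frg(\hh)=\hh$. The main step is a dimension count driven by universality. Consider the adjoint orbit map and the subset $\bigcup_{g\in G}\Ad_g(\hh)\subset\frg$; universality says this union is all of $\frg$. I would bound the dimension of this union by the dimension of $\hh$ plus the dimension of the directions transverse to $N_G(\hh)$ that move $\hh$ around, namely $\dim\frg-\dim\nn_\frg(\hh)$. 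Thus
\[
\dim\frg=\dim\Big(\bigcup_{g\in G}\Ad_g(\hh)\Big)\le \dim\hh+\big(\dim\frg-\dim\nn_\frg(\hh)\big),
\]
which forces $\dim\nn_\frg(\hh)\le\dim\hh$, and combined with $\hh\subset\nn_\frg(\hh)$ gives $\nn_\frg(\hh)=\hh$. Hence $\hh=\lie(N_G(\hh))$, so $\hh$ is the Lie algebra of the \emph{closed} subgroup $N_G(\hh)$, and passing to identity components gives the closed connected subgroup $H:=N_G(\hh)^\circ$ with $\lie(H)=\hh$. The asserted equalities $H=N_G(\hh)^\circ=N_G(H)^\circ$ then follow because $N_G(\hh)$ and $N_G(H)$ have the same Lie algebra $\nn_\frg(\hh)=\hh$ and the same identity component.

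The step I expect to be the main obstacle is making the dimension bound on $\bigcup_{g\in G}\Ad_g(\hh)$ rigorous. The naive count above treats the union as if it were the image of a smooth map $G/N_G(\hh)\times\hh\to\frg$, and I would indeed set it up this way, using that the fibers of $g\mapsto\Ad_g(\hh)$ are cosets of $N_G(\hh)$ so that the parameter space has dimension $\dim\frg-\dim\nn_\frg(\hh)+\dim\hh$. The subtlety is that the image of a smooth map can have dimension strictly less than that of its source, which is fine for the inequality, but one must ensure the image genuinely fills $\frg$ only when the source has at least the dimension of $\frg$; since universality gives surjectivity onto $\frg$ (a full-dimensional image), Sard's theorem forces the source dimension to be at least $\dim\frg$, which is exactly the inequality we want. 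I would phrase this carefully via Sard, or alternatively argue measure-theoretically that a countable-to-one or lower-dimensional image cannot cover $\frg$, to avoid any gap coming from non-closedness of the orbits.
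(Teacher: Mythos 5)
Your proposal is correct and is essentially the paper's own argument: the paper likewise applies Sard's theorem to the surjection from the associated bundle $E(\hh)=(G\times\hh)/N_G(\hh)^\circ$ onto $\frg$ to obtain $\dim \hh\ge \dim N_G(\hh)^\circ$, and combines it with the reverse inequality coming from $\hh\subset\nn_\frg(\hh)$ (phrased there via the virtual subgroup $i:H\to G$ with $i(H)\subset N_G(\hh)^\circ$, followed by an inverse function theorem step you can skip since you argue directly at the Lie algebra level) to conclude $\hh=\lie(N_G(\hh)^\circ)$, with closedness automatic because normalizers are closed. One cosmetic correction: the map $(gN_G(\hh),X)\mapsto {\rm Ad}_g(X)$ is not well defined on the product $G/N_G(\hh)\times\hh$ (replacing $g$ by $gn$ changes the value), so the dimension count must be run, as in the paper, on the associated bundle $(G\times\hh)/N_G(\hh)^\circ$, which has exactly the dimension $\dim\frg-\dim\nn_\frg(\hh)+\dim\hh$ you use.
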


\begin{proof} Note that we have a surjection $G\times \hh\to \frg:(g,X)\mapsto {\rm Ad}_g(X)$. Let $G_\hh^\circ$ be the connected component of the normalizer $N_G(\hh)$. Then the above surjection factors through the surjection $E(\hh)\to \frg$, where $E(\hh)=(G\times \hh)/G_\hh^\circ$, the associated $G$-equivariant vector bundle on $G/G_\hh^\circ$. 
	Since this is a surjection, by Sard's theorem (cf. Theorem 6.2 \cite{B}), we can say that (all real dimensions) 
	\[\dim(G)-\dim(G_\hh^\circ)+\dim(\hh)\ge \dim(\frg)=\dim(G),\]
	\[{\rm or},\;\dim(\hh)\ge \dim(G_\hh^\circ).\]
 Since $\hh$ is a Lie subalgebra, there must be a connected virtual Lie subgroup $i:H\to G$ such that $di(Lie(H))=\hh$. But $i(H)\subset G_\hh^\circ$, which means $$\dim(G_\hh^\circ)\ge \dim(H)=\dim(\hh).$$ Therefore, $\dim(H)=\dim(G_\hh^\circ)$, which means $i:H\to G_\hh^\circ$ must be surjective, as $G_\hh^\circ$ is connected. We have a one-one, onto map $i$ between two connected manifolds, and it induces isomorphism at all tangent spaces. Therefore $i$ is to be a diffeomorphism by the inverse function theorem. Since $G_\hh$ is closed in $G$ by definition, so is its connected component $G_\hh^\circ$.

\end{proof}

As a corollary, we get a different proof for the following result of A. Borel and J. De Siebenthal (cf. Theorem in \S 8 of \cite{B-S}). Recall that a lie subalgebra $\hh$ of $\frg$ is said to be {\it maximal rank} if $\hh$ contains a maximal torus of $\frg$. 

\begin{corollary}\label{max-rank-1} Let $G$ be a compact Lie group and let $\mathfrak k\subset \frg$ be a Lie subalgebra of \emph{maximal rank}. Then there is a connected \emph{closed} Lie subgroup $K\subset G$ such that $\mathfrak k$ is the Lie algebra of $K$.\end{corollary}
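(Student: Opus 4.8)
The plan is to derive Corollary \ref{max-rank-1} as an immediate consequence of Proposition \ref{universal-closed}. The key observation is that a maximal rank subalgebra $\kk \subset \frg$ (that is, one containing a maximal torus $\frt$ of $\frg$) is automatically universal for the adjoint representation when $G$ is compact. Once universality is established, Proposition \ref{universal-closed} applies verbatim: $\kk$, being a Lie subalgebra, is the Lie algebra of a closed connected subgroup $K = N_G(\kk)^\circ$ of $G$. So the entire content of the proof reduces to verifying the single implication ``maximal rank $\Rightarrow$ universal.''

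First I would recall the standard conjugacy theorem for compact connected Lie groups: every element of $\frg$ lies in the $\Ad$-orbit of some element of a fixed maximal abelian subalgebra $\frt$ (equivalently, every maximal torus is conjugate, and every $X \in \frg$ is conjugate into $\frt$). Concretely, given $X \in \frg$, the closure of the one-parameter subgroup it generates lies in some maximal torus $T'$, and since all maximal tori are conjugate, there is $g \in G$ with $\Ad_g(\frt') \subset \frt$ where $\frt'$ is the Lie algebra of $T'$; hence $\Ad_g(X) \in \frt$. Because $\kk$ has maximal rank, it contains a maximal abelian subalgebra $\frt$ of $\frg$ (after possibly conjugating $\kk$ so that its maximal torus is exactly $T$). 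Therefore every $\Ad$-orbit meets $\frt \subset \kk$, which is precisely the statement that $\kk$ is universal for the adjoint action.

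The one point requiring a little care is the matching of maximal tori: the definition of maximal rank only guarantees that $\kk$ contains \emph{some} maximal torus $\frt_\kk$ of $\frg$, while the conjugacy argument produces elements conjugate into a \emph{fixed} $\frt$. Since all maximal tori in a compact connected Lie group are conjugate, I can choose the fixed maximal abelian subalgebra to be $\frt_\kk$ itself; then for every $X \in \frg$ there exists $g$ with $\Ad_g(X) \in \frt_\kk \subset \kk$, giving universality directly. I do not expect any genuine obstacle here, since compactness is exactly what makes the conjugacy theorem available; the whole point is that the hard analytic work has already been absorbed into Proposition \ref{universal-closed} and the classical conjugacy of maximal tori.

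I would then conclude by invoking Proposition \ref{universal-closed} with $\hh = \kk$: universality of $\kk$ yields a closed connected subgroup $K \subset G$ with $\lie(K) = \kk$, and in fact $K = N_G(\kk)^\circ$, which recovers the Borel--de Siebenthal result. It is worth noting explicitly in the write-up that this gives an independent, representation-theoretic proof avoiding the original combinatorial arguments of \cite{B-S}.
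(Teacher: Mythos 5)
Your proposal is correct and follows essentially the same route as the paper: the authors likewise observe that a maximal rank subalgebra contains a maximal toral subalgebra, that such subalgebras are universal for the adjoint action (via the conjugacy theorem for maximal tori in compact connected groups), and then invoke Proposition \ref{universal-closed} to obtain the closed subgroup $K$. Your write-up merely spells out the conjugacy argument and the choice of torus in more detail than the paper's three-line proof.
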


\begin{proof}
	If $G$ is compact, then in $\frg$, every maximal rank subalgebra $\mathfrak k$ contains a maximal toral subalgebra. Since maximal toral subalgebras are universal in $\frg$, so is $\mathfrak k$. Therefore by Proposition \ref{universal-closed}, $\mathfrak k$ must be the Lie algebra of a \emph{closed} subgroup $K$.
      \end{proof}

\begin{theorem}\label{tangent-bundle-1}
  Let $G$ be a compact connected Lie group, $\hh\subset \frg$ any Lie subalgebra. Then the following are equivalent.
  	\begin{enumerate}
  	\item $\hh$ of maximal rank.
  	
  	\item In the adjoint representation, $\hh$ is universal in $\frg$.
  	
  	\item $\hh$ is the Lie algebra of a closed connected subgroup $H$, and the Euler characteristic of $G/H$ is positive.
  	
  	\item $\hh$ is the Lie algebra of the connected component of $G_\hh=N_G(\hh)$. If $W=\frg/\hh$, $E_W$ is the $G$-equivariant bundle on $G/G_\hh$ associated to $W$, then the obstruction class to having a nowhere vanishing section of $E_W$ on $G/G_\hh$ is nonzero.
  \end{enumerate}
  
\end{theorem}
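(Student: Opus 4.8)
The plan is to prove the four-way equivalence through the implications $(1)\Leftrightarrow(2)$, $(1)\Leftrightarrow(3)$ and $(3)\Leftrightarrow(4)$. The only genuinely new input is the direction $(2)\Rightarrow(1)$; the remaining links are bookkeeping built on top of the Borel--de Siebenthal theorem (Corollary \ref{max-rank-1}), the Hopf--Samelson formula \cite{H-S}, and the Poincar\'e--Hopf identity. For $(1)\Rightarrow(2)$ I would invoke the maximal-torus theorem for the compact group $G$: every adjoint orbit in $\frg$ meets a fixed maximal toral subalgebra $\frt_0$, so $\frt_0$ is universal, and any subalgebra $\hh\supseteq\frt_0$ is universal a fortiori. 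A maximal rank $\hh$ contains such a $\frt_0$ by definition.

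The heart of the matter is $(2)\Rightarrow(1)$, that a universal subalgebra has maximal rank. By Proposition \ref{universal-closed}, universality gives that $H:=G_\hh^\circ$ is a closed connected subgroup with $\lie(H)=\hh$. I would pick a maximal torus $T_H$ of the compact connected group $H$ and set $\frt_H=\lie(T_H)$. Since $\bigcup_{h\in H}{\rm Ad}_h(\frt_H)=\hh$ by the maximal-torus theorem applied to $H$, composing with the universality of $\hh$ shows that \emph{every} adjoint orbit of $G$ already meets $\frt_H$; that is, $\frt_H$ is itself universal in $\frg$. Now choose a regular element $X_0\in\frg$; its orbit meets $\frt_H$, so $\frt_H$ contains a regular element $Y$, and since $\frt_H$ is abelian we get $\frt_H\subseteq\zz_\frg(Y)=:\frt'$, a Cartan subalgebra of dimension $r={\rm rank}(\frg)$. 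Finally, for an arbitrary $Z\in\frt'$, universality of $\frt_H$ produces $g'$ with ${\rm Ad}_{g'}(Z)\in\frt_H\subseteq\frt'$; two $G$-conjugate elements of a Cartan subalgebra are conjugate under the Weyl group $\mathscr W=N_G(T')/T'$, so $Z\in w^{-1}\cdot\frt_H$ for some $w\in\mathscr W$. Hence $\frt'=\bigcup_{w\in\mathscr W}w\cdot\frt_H$, a finite union of subspaces; since a real vector space is not a finite union of proper subspaces, this forces $\dim\frt_H=r$, so $\frt_H=\frt'$ and $\hh\supseteq\frt'$ has maximal rank.

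For $(1)\Leftrightarrow(3)$ I would combine Corollary \ref{max-rank-1}, which says a maximal rank $\hh$ is the Lie algebra of a (unique) closed connected subgroup $H$, with the Hopf--Samelson result \cite{H-S}: for closed connected $H\subseteq G$ one has $\chi(G/H)\geq 0$ always, with $\chi(G/H)>0$ if and only if ${\rm rank}(H)={\rm rank}(G)$ (in which case $\chi(G/H)=|\mathscr W_G|/|\mathscr W_H|$). Maximal rank thus yields $\chi(G/H)>0$, and conversely $\chi(G/H)>0$ forces equal rank, i.e.\ maximal rank of $\hh$. For $(3)\Leftrightarrow(4)$ I would argue as follows. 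Under conditions $(1)$--$(3)$, Proposition \ref{universal-closed} identifies $H$ with $N_G(\hh)^\circ=G_\hh^\circ$, and the associated bundle $E_W$ for $W=\frg/\hh$ is precisely the tangent bundle $T(G/H)$ via the isotropy representation. Its real rank equals $\dim(G/H)$, so by the fact recalled at the start of the section (VII Corollary 14.4 of \cite{B}) the final obstruction to a nowhere-vanishing section is the Euler class $e(E_W)\in H^{\dim(G/H)}(G/H,\ZZ)$. Because $H$ is connected the isotropy representation lands in $GL^+$, so $G/H$ is a closed orientable manifold; Poincar\'e--Hopf then gives $\langle e(T(G/H)),[G/H]\rangle=\chi(G/H)$, and evaluation on the fundamental class is an isomorphism $H^{\dim(G/H)}(G/H,\ZZ)\cong\ZZ$. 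Hence $e(E_W)\neq 0 \iff \chi(G/H)\neq 0 \iff \chi(G/H)>0$, using $\chi\geq 0$, which is exactly $(3)$; the converse $(4)\Rightarrow(3)$ reads off the same chain, since $(4)$ itself provides $H=G_\hh^\circ$.

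The main obstacle is the step $(2)\Rightarrow(1)$. The delicate points there are verifying that the maximal toral subalgebra $\frt_H$ of $\hh$ inherits universality in all of $\frg$ (which rests on applying the maximal-torus theorem inside $H$ and then absorbing the $H$-conjugation into the ambient $G$-conjugation), and that a universal toral subalgebra must be a full Cartan, for which the essential observation is that $G$-conjugate elements of a Cartan subalgebra are Weyl-conjugate, turning universality into an impossible covering of $\frt'$ by finitely many proper subspaces. Once the equal-rank characterization of $\chi(G/H)$ and the Poincar\'e--Hopf identity are in hand, the remaining equivalences are routine.
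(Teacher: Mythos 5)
Your $(1)\Leftrightarrow(2)$ and $(1)\Leftrightarrow(3)$ are essentially sound. Indeed, your $(2)\Rightarrow(1)$ is a correct \emph{alternative} to the paper's argument: the paper conjugates a topological generator $a=\exp(A)$ of a maximal torus of $G$ into $H$, using surjectivity of $\exp$ on the compact connected $H=\exp(\hh)$ and closedness of $H$ (from Proposition \ref{universal-closed}), whereas you first transfer universality from $\hh$ to a maximal toral subalgebra $\frt_H\subset\hh$ and then force $\frt_H$ to be a full Cartan subalgebra by the Weyl-pigeonhole argument ($\frt'=\bigcup_{w\in\mathscr W}w\cdot\frt_H$, plus the fact that a real vector space is not a finite union of proper subspaces); both are valid. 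Note also that your $(3)\Rightarrow(1)$ invokes the full Hopf--Samelson equivalence ($\chi(G/H)>0$ if and only if equal rank), while the paper uses only the easy direction and recovers $(3)\Rightarrow(1)$ by going around the cycle $(3)\Rightarrow(4)\Rightarrow(2)\Rightarrow(1)$; that is a legitimate difference, not a gap.

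The genuine gap is in your $(3)\Leftrightarrow(4)$: statement $(4)$ concerns the bundle $E_W$ on $G/G_\hh$, where $G_\hh=N_G(\hh)$ is in general \emph{disconnected}, but your entire chain (orientability, integral Euler class, Poincar\'e--Hopf pairing against the fundamental class, evaluation isomorphism) is carried out on $G/H$ with $H=G_\hh^\circ$. This is not a corner case: already for $\hh=\frt$ in $G=SU(2)$ one has $G_\hh=N_G(T)$ and $G/G_\hh\cong\RR P^2$, a non-orientable base on which $E_W\cong T(\RR P^2)$ has no integral Euler class and no fundamental class to pair with --- which is exactly why the theorem says ``obstruction class'' rather than Euler class, and why such quotients drive the paper's \S 3 counterexample. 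For $(3)\Rightarrow(4)$ the paper bridges the two spaces by an intermediate lemma ($N_G(H)/H$ is finite when $T\subset H$ is closed), so that $p:G/H\to G/G_\hh$ is a finite covering; a hypothetical nowhere vanishing section $\sigma$ of $E_W$ on $G/G_\hh$ would pull back to a nowhere vanishing section $p^*\sigma$ of $T(G/H)$, contradicting $e(T(G/H))\neq 0$ (your Poincar\'e--Hopf computation is valid on $G/H$), whence the final obstruction on $G/G_\hh$ is nonzero. Your converse ``reads off the same chain'' fails as well: nonvanishing of the obstruction downstairs does not transfer across the covering, since sections do not descend and pullback can kill top-degree classes (e.g.\ the mod $2$ Euler class $w_2(T(\RR P^2))\neq 0$ pulls back to $w_2(T(S^2))=\chi(S^2)\bmod 2=0$). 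The paper instead proves $(4)\Rightarrow(2)$ directly --- every constant section $s_u$ of $E_W=E_\frg/E_\hh$ must vanish somewhere, so $\hh$ is universal --- and closes the loop via $(2)\Rightarrow(1)\Rightarrow(3)$. Without the covering lemma and this rerouting of $(4)\Rightarrow(3)$, your cycle of equivalences does not close.
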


\proof We will prove (1) $\Rightarrow$ (2) $\Rightarrow$ (1) $\Rightarrow$ (3) $\Rightarrow$ (4) $\Rightarrow$ (2).
  
$(1) \Rightarrow (2):$ Since $\hh$ is of maximal rank, it contains a maximal toral algebra. Any maximal toral algebra is universal for the adjoint representation.

$(2) \Rightarrow (1):$ Since $\hh$ is universal, it is the Lie algebra of a closed connected Lie subgroup $H$ by \ref{universal-closed}. Let $T$ be a maximal torus of $G$ and let $a$ be a generating element for $T$. For a Lie group $G$ and its Lie algebra $\frg$, let $\exp:\frg \to G$ be the exponential map. Let $a=\exp(A)$ for some $A\in \mathfrak t$. If $\hh$ is universal, then there exists $x\in G$ such that ${\rm Ad}_x A\in \hh$.  
Then $xax\inv=\exp({\rm Ad}_x A)\in \exp(\hh).$ Note that $H$ is a compact connected Lie group, hence $\exp(\hh)=H$. This implies that $x T x\inv\subset H$, in other words, $\hh$ is of maximal rank.

$(1) \Rightarrow (3):$ By (2), Corollary \ref{max-rank-1}, $\hh$ is the Lie algebra of a closed connected Lie subgroup $H$ of maximal rank. This is equivalent to saying that $H$ contains a maximal torus of $G$. In that case, a theorem of Hopf and Samelson \cite{H-S} says that the Euler characteristic of $G/H$ is positive.

 $(3) \Rightarrow (4):$
If $G/H$ is \emph{orientable} (which is the case always, if $H$ is connected by Proposition 15.10 on p. 92 of \cite{Bu})
 then by  Corollary 11.12 of \cite{M-S}, the Euler characteristic \[\chi(G/H)=\langle e(T(G/H)),[G/H]\rangle\] which means the Euler class of the tangent bundle of $G/H$ is nonzero.

Recall that $G_\hh=\{g\in G|~{\rm Ad}_gv\in \hh,~\forall v\in \hh\}=N_G(\hh)$. Since $H$ is connected, $N_G(H)= N_G(\hh)$. Now we make an intermediate lemma.

\begin{lemma} Let $G$ be a compact connected Lie group and let $T\subset G$ be a maximal torus. Let $H\subset G$ be a closed subgroup containing $T$. Then $N_G(H)/H$ is finite.\end{lemma}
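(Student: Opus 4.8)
The plan is to prove that $N_G(H)/H$ is finite by showing that $N_G(H)/H$ is a compact Lie group whose identity component is trivial, i.e.\ that it is a discrete group; since it is also compact, it must be finite. The key structural input is that $H$ contains the maximal torus $T$, which severely constrains the normalizer.

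\begin{proof}
Since $G$ is compact and $H\subset G$ is closed, $N_G(H)$ is a closed subgroup of $G$, hence a compact Lie group, and $H$ is a closed normal subgroup of $N_G(H)$. Therefore $N_G(H)/H$ is a compact Lie group, and it suffices to show that its Lie algebra is zero, for then $N_G(H)/H$ is a compact discrete group and hence finite. Equivalently, I will show that the Lie algebra of $N_G(H)$ coincides with $\hh=\lie(H)$. Let $\nn=\lie(N_G(H))$. Clearly $\hh\subset\nn$. For the reverse inclusion, recall that for a compact group the Lie algebra of the normalizer of a closed connected subgroup is the Lie-algebra normalizer $N_\frg(\hh)=\{X\in\frg:[X,\hh]\subset\hh\}$; this follows because $N_G(H)^\circ=N_G(\hh)^\circ$ and differentiating the condition $\Ad_{\exp(tX)}(\hh)\subset\hh$ gives $[X,\hh]\subset\hh$. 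So the task reduces to showing $N_\frg(\hh)=\hh$.

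The main step is to exploit that $\hh\supset\frt:=\lie(T)$, a maximal toral subalgebra. Take $X\in N_\frg(\hh)$ and decompose $\frg\otimes\CC$ into root spaces $\frg_\CC=\frt_\CC\oplus\bigoplus_{\alpha}\galpha$ relative to $\frt$. Because $\hh$ is of maximal rank it is stable under $\ad(\frt)$, so $\hh_\CC$ is itself a sum of $\frt_\CC$ together with a collection of root spaces $\galpha$ for $\alpha$ in some subset $\Phi_H$ of the roots. Writing $X_\CC=X_0+\sum_\alpha X_\alpha$ with $X_0\in\frt_\CC$ and $X_\alpha\in\galpha$, the condition $[X,\hh]\subset\hh$ — applied to elements of $\frt$ — forces each component $X_\alpha$ with $\alpha\notin\Phi_H$ to bracket $\frt$ into $\hh_\CC$, while $[\frt,X_\alpha]=\alpha(\frt)X_\alpha$ lands in $\galpha$. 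Since $\galpha\not\subset\hh_\CC$ for $\alpha\notin\Phi_H$ and $\alpha\ne 0$ means $\alpha(H)\ne 0$ for suitable $H\in\frt$, this forces $X_\alpha=0$ for all $\alpha\notin\Phi_H$. Hence $X_\CC\in\frt_\CC\oplus\bigoplus_{\alpha\in\Phi_H}\galpha=\hh_\CC$, so $X\in\hh$. This gives $N_\frg(\hh)\subset\hh$ and therefore equality.

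I expect the main obstacle to be the careful justification that $\hh_\CC$ is a sum of $\frt_\CC$ and whole root spaces, and the clean extraction of the vanishing of the off-diagonal components. The point is that maximal rank means $\frt\subset\hh$, so $\hh$ is an $\ad(\frt)$-submodule, and every irreducible $\ad(\frt)$-submodule of $\frg_\CC$ is either contained in $\frt_\CC$ or equals some $\galpha$ (the root spaces are one-dimensional and the weights are distinct). This is what licenses the combinatorial description $\hh_\CC=\frt_\CC\oplus\bigoplus_{\alpha\in\Phi_H}\galpha$ and makes the normalizer computation a finite root-by-root check. An alternative, more conceptual route avoiding roots is to note that $N_\frg(\hh)$ is $\ad(\frt)$-stable and contains $\frt$, so it too is a sum of root spaces; if some $\galpha\subset N_\frg(\hh)$ with $\alpha\notin\Phi_H$, then $[\galpha,\frt]=\galpha\subset\hh$, contradicting $\alpha\notin\Phi_H$. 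Either way the finiteness of $N_G(H)/H$ follows from $\lie(N_G(H))=\hh$.
\end{proof}
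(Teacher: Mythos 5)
Your proof is correct, but it takes a genuinely different route from the paper's. The paper argues entirely at the group level: for $x\in N_G(H)$ the torus $x\inv Tx$ lies in $H$, so by conjugacy of maximal tori there is $h\in H$ with $x\inv Tx=hTh\inv$, i.e.\ every coset $xH$ contains a representative in $N_G(T)$; since $T\subset H$ and $N_G(T)/T$ is finite, this immediately bounds $|N_G(H)/H|$ by the order of the Weyl group. You instead argue infinitesimally: you show that the Lie-algebra normalizer $N_{\frg}(\hh)=\{X\in\frg:[X,\hh]\subset\hh\}$ equals $\hh$, using the root space decomposition of $\frg_\CC$ relative to $\frt$ and the fact that any ${\rm ad}(\frt)$-stable subspace is $\frt_\CC$ plus whole root spaces (root spaces being one-dimensional with distinct nonzero weights), so that $N_G(H)/H$ is a compact zero-dimensional Lie group, hence finite. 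Your route costs more machinery (complexification, semisimplicity of the ${\rm ad}(\frt)$-action, and a root-by-root check) where the paper needs only conjugacy of maximal tori; in exchange it proves the strictly stronger infinitesimal fact that every subalgebra containing $\frt$ is self-normalizing, ${\rm Lie}(N_G(\hh))=\hh$ --- which is precisely the first assertion in part (4) of Theorem \ref{tangent-bundle-1}, so your argument does double duty there --- whereas the paper's argument yields the explicit quantitative bound $|N_G(H)/H|\le |N_G(T)/T|$. One small caveat: since $H$ is not assumed connected in this lemma, your parenthetical identity $N_G(H)^\circ=N_G(\hh)^\circ$ is not immediate a priori (only the inclusion $N_G(H)\subset N_G(\hh)$ is automatic for disconnected $H$); but the one direction your proof actually uses, ${\rm Lie}(N_G(H))\subset N_{\frg}(\hh)$, follows from the differentiation of ${\rm Ad}_{\exp(tX)}(\hh)\subset\hh$ exactly as you describe, so this is a cosmetic overstatement rather than a gap.
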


\proof 
If $x\in N_G(H)$, then $x\inv Tx\subset x\inv Hx=H$. Since $T$ and $x\inv Tx$ are both maximal tori of $H$, there is $h\in H$ such that $x\inv Tx=hTh\inv$, which implies $xh\in N_G(T)\cap xH$. Since $T\subset H$, the entire coset $xhT\subset xH$. Since $N_G(T)/T$ is finite, there are only finitely many cosets $yT$ where $y\in N_G(T)$. Since $xH$ are either \emph{disjoint} or equal, this implies there are only finitely many cosets $xH$.\hfill$\Box$\\

We resume the proof of Theorem \ref{tangent-bundle-1}. Writing $G_\hh=N_G(\hh)$, $G_\hh/H=N_G(H)/H$ is finite. Therefore the projection $p:G/H\to G/G_\hh$ is a covering map, so the pullback of the tangent bundle is the tangent bundle. If the tangent bundle below has a nowhere vanishing section $\sigma$, the tangent bundle above admits the nowhere vanishing section $p^*\sigma$, which is impossible because the Euler class of $T(G/H)$ is nonzero by the above. Consider the representation of $G_\hh$ on $W=\frg/\hh$ induced from the adjoint representation of $G$ on $\frg$. 
The tangent bundle $T(G/G_\hh)$ is isomorphic to the associated bundle $E_W$ on $G/G_\hh$. Therefore we have proved that if $\chi(G/H)>0$, then the obstruction class of $E_W=T(G/G_\hh)$ for the existence of a nowhere vanishing section is nonzero.

$(4) \Rightarrow (2):$ As in \cite{A-D1}, for every $u\in \frg$, the constant section of $E_\frg\cong (G/G_\hh)\times \frg$ defines a section $s_u$ of the quotient bundle $E_\frg/E_\hh=E_W$. Since $s_u$ has a zero for every $u\in \frg$, $\hh$ is universal.

  \subsection{Universal Lie subalgebras for the complexified adjoint representation} 

  In this section, we prove an analogue of Theorem \ref{tangent-bundle-1} for the complexified adjoint representation of a compact connected Lie group.

  The complexification of a real Lie algebra $\frg$ is the complex Lie algebra $\frg_\mathbb C := \frg\otimes_{\mathbb R} \mathbb C $, where the complex Lie bracket is given by $$[X+iY, S+iT]=[X,S]-[Y,T]+i([Y,S]+[X,T])~ {\rm for}~X,~Y,~S,~T \in \frg.$$ If $\frg\subset \frg l(n, \mathbb R)$, then the complexification $\frg _\mathbb C$ can be identified with the Lie algebra $\frg + i\frg\subset \frg l(n, \mathbb C).$

  For every compact connected Lie group $G$, there is a unique reductive algebraic group $G_\CC$, called the \emph{complexification of $G$}, with the following properties. (i) $G$ is a maximal compact Lie subgroup of $G_\CC$. (ii) The natural map ${\rm Lie}(G)\otimes_\RR \CC\to {\rm Lie}(G_\CC)$ is an isomorphism, in other words, ${\rm Lie}(G_\CC)$ is the complexification of ${\rm Lie}(G)$. (iii) Consider the category $\mathscr C$ of finite dimensional unitary representations of $G$, and the category $\mathscr C'$ of rational algebraic complex representations of $G_\CC$. Then restriction to the subgroup $G$ induces an equivalence of categories $\mathscr C'\to \mathscr C$. In other words, every unitary representation $\rho:G\to GL(U)$ is the restriction of a unique rational representation $\rho_C:G_\CC\to GL(U)$. (iv) If $\rho:G\to GL(U)$ is a faithful unitary representation, then $\rho_C$ is an isomorphism of $G_\CC$ with the Zariski closure of $\rho(G)$ in $GL(U)$. For details, see Processi \cite{P}.

   Let us now recall that a Borel subgroup of a complex reductive algebraic group is a maximal Zariski closed and connected solvable algebraic subgroup. Such a subgroup always exists for dimension reason.
  Let $G$ be a compact connected Lie group and consider the complexified adjoint representation on $\frg_\mathbb C$. Let $G_\mathbb C$ be the complexification of $G$. If $B$ is a Borel subgroup of $G_\CC$ then  there is a maximal torus $T$ of $G$ such that $T=G\cap B$. Let $\bb={\rm Lie}(B)$. We have the following natural identifications, where the first one is a $G$-equivariant diffeomorphism and the second one is a $T$-equivariant isomorphism of real vector spaces (cf. Theorem 1, \S 7.3, page 382 \cite{P}).

  \begin{equation}\label{gmodt}G/T\isomor G_\mathbb C/B, \; \;\;\frg/\mathfrak t\isomor \frg_\mathbb C/\bb\end{equation}

 As a corollary, we recover the following well known result, which can also be seen as a special case of Theorem 5.7 of \cite{A-D1}. 
 
  \begin{corollary}\label{schur}\emph{(Schur's triangularization)} Let $G$ be a compact connected Lie group with Lie algebra $\frg$, and let $\bb\subset \frg_\CC$ be a Borel subalgebra. Then $\bb$ is universal in $\frg_\CC$ for $G$.\end{corollary}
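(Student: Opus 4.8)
The plan is to deduce Corollary \ref{schur} directly from the identifications in (\ref{gmodt}), reducing the universality of $\bb$ to the classical fact that every element of a compact connected Lie group is conjugate into a fixed maximal torus. Recall that $\bb$ is universal in $\frg_\CC$ for the $G$-action precisely when, for every $X\in\frg_\CC$, there exists $g\in G$ with $\mathrm{Ad}_g(X)\in\bb$. So I would fix an arbitrary $X\in\frg_\CC$ and produce the required $g$.

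First I would use the $T$-equivariant isomorphism $\frg_\CC/\bb\isomor\frg/\frt$ from (\ref{gmodt}) to reorganize the problem. Since $T=G\cap B$ and $\frt=\mathrm{Lie}(T)$, the containment $\mathrm{Ad}_g(X)\in\bb$ is equivalent to the vanishing of the image of $\mathrm{Ad}_g(X)$ in the quotient $\frg_\CC/\bb$. The key structural point is that $\bb$ contains $\frt_\CC$ together with all the positive root spaces, so $\frg_\CC=\bb\oplus\overline{\nn}$ where $\overline{\nn}$ is the sum of the negative root spaces; thus universality of $\bb$ amounts to the statement that every orbit meets $\bb$. I would then invoke that every $G$-orbit on $\frg_\CC$ under the (complexified) adjoint action can be moved, via the Iwasawa-type or conjugacy considerations encoded in the diffeomorphism $G/T\isomor G_\CC/B$, so that its image lands in $\bb$.

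The cleanest route, which I would take, is to appeal to the full flag manifold description. The diffeomorphism $G/T\isomor G_\CC/B$ says that the $G$-action on the flag variety $G_\CC/B$ is transitive. Given $X\in\frg_\CC$, I want $g\in G$ with $\mathrm{Ad}_g(X)\in\bb$, i.e. $X\in\mathrm{Ad}_{g^{-1}}(\bb)=\mathrm{Lie}(g^{-1}Bg)$. As $g$ ranges over $G$, the subgroups $g^{-1}Bg$ range over all $G$-translates of $B$, and by the transitivity of $G$ on $G_\CC/B$ these are \emph{all} the Borel subgroups of $G_\CC$. So it suffices to show that every $X\in\frg_\CC$ lies in the Lie algebra of some Borel subgroup. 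Since $G_\CC$ is reductive and every element of a reductive Lie algebra is contained in some Borel subalgebra (each $X$ lies in a Borel subalgebra of the reductive algebra $\frg_\CC$, via its containment in a maximal solvable subalgebra), this gives the claim.

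I expect the main obstacle to be the step asserting that every $X\in\frg_\CC$ lies in some Borel subalgebra; for a general reductive Lie algebra this requires the fact that the union of all Borel subalgebras is all of $\frg_\CC$, which follows because the conjugates of $\bb$ are parametrized by the flag variety and the incidence variety $\{(X,\bb')\mid X\in\bb'\}\to\frg_\CC$ is surjective. Alternatively, and more in the spirit of this paper, one can bypass it by first reducing to $X$ semisimple (the nilpotent part automatically lies in the nilradical of any Borel through the semisimple part) and then using that semisimple elements of $\frg_\CC$ are $G_\CC$-conjugate into $\frt_\CC\subset\bb$, combined with the transitivity of $G$ on the flag variety to replace a $G_\CC$-conjugation by a $G$-conjugation. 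I would present whichever of these two arguments interfaces most directly with the identifications already recorded in (\ref{gmodt}).
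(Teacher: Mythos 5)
Your proposal is correct, but it proves the corollary by a genuinely different route than the paper. The paper's proof is obstruction-theoretic: since $\frt$ is universal in $\frg$, Theorem \ref{tangent-bundle-1} yields that the Euler class of $E_W=E_\frg/E_\frt\cong T(G/T)$ on $G/T$ is nonzero (ultimately via the Hopf--Samelson theorem \cite{H-S} on $\chi(G/T)$), and then the \emph{second} identification in (\ref{gmodt}), the $T$-equivariant isomorphism $\frg/\frt\isomor\frg_\CC/\bb$, transports this nonvanishing to the bundle $E_{W'}$ with $W'=\frg_\CC/\bb$; a nonzero Euler class forces every section $s_u$ to vanish somewhere, i.e.\ universality, by the sufficiency criterion of \cite{A-D1}. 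You instead use only the \emph{first} identification in (\ref{gmodt}) --- the transitivity of $G$ on the flag manifold $G_\CC/B$ --- together with the conjugacy and self-normalizing properties of Borel subgroups, to conclude that the $\mathrm{Ad}(G)$-translates of $\bb$ already exhaust \emph{all} Borel subalgebras, thereby reducing everything to the classical fact that $\frg_\CC$ is the union of its Borel subalgebras. That fact is standard and your justifications are sound: $\CC X$ is solvable, hence contained in a maximal solvable subalgebra, and over $\CC$ in characteristic zero these are precisely the Borel subalgebras by the conjugacy theorem; alternatively, the incidence variety surjects onto $\frg_\CC$ because the flag variety is complete (so the projection has closed image) and the image contains the dense set of regular semisimple elements. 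Your argument is more elementary --- no characteristic classes and no Hopf--Samelson --- whereas the paper's route delivers, in the paper's own currency, the stronger conclusion that the Euler class of $E_{W'}$ is nonzero, which is the form of statement that the surrounding results (Corollary \ref{tangent-bundle-2}, Theorem \ref{complexified-1}) traffic in.

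One inaccuracy, harmless because it occurs only in your alternative route: the nilpotent part does \emph{not} lie in the nilradical of \emph{any} Borel subalgebra through the semisimple part. In $\mathfrak{sl}(2,\CC)$ take $X_s=0$ and $X_n$ the lower-triangular nilpotent; the standard upper-triangular Borel contains $X_s$ but not $X_n$. The correct statement is that one can \emph{choose} a Borel containing both, e.g.\ by taking a Borel subalgebra of the centralizer of $X_s$ that contains $X_n$ and extending it to a Borel subalgebra of $\frg_\CC$. Since your primary route does not rely on this parenthetical, the proof stands.
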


  \proof Since $\mathfrak t$ is universal in $\frg$ for the adjoint representation of $G$, the Euler class of the quotient bundle $E_W=E_\frg/E_\mathfrak t$ on $G/T$ is nonzero, where $W=\frg/\mathfrak t$ (Theorem \ref{tangent-bundle-1}). On the other hand, for the complexified adjoint representation of $G$ on $\frg_\CC$ and the subspace $\bb$, $W'=\frg_\CC/\bb$, the quotient bundle $E_{W'}=E_{\frg_\CC}/E_\bb$ is isomorphic to $E_W$ by (\ref{gmodt}). Thus the Euler class of $E_{W'}$ is also nonzero, so $\bb$ is universal in $\frg_\CC$.\hfill$\Box$
  \vskip 1em

  The following is immediate from Proposition \ref{universal-closed}.

\begin{corollary}\label{universal-closed-complex} Let $G$ be a compact connected Lie group and let $\hh\subset \frg_\mathbb C$ be a universal complex subspace for the complexified adjoint action of $G$. If $\hh$ is a complex Lie subalgebra of $\frg$, then $\hh$ is the Lie algebra of a \emph{closed} connected complex Lie subgroup $H\subset G$. Moreover, $H=N_{G_\mathbb C}(\hh)^\circ=N_{G_\mathbb C}(H)^\circ$. \end{corollary}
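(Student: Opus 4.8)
The plan is to reduce everything to Proposition~\ref{universal-closed}, applied not to $G$ but to its complexification $G_\CC$. The starting point is property (ii) of the complexification, which identifies $\frg_\CC$ canonically with $\lie(G_\CC)$; moreover $G_\CC$ is a \emph{connected} complex Lie group. Under this identification, the complexified adjoint representation of $G$ on $\frg_\CC$ is exactly the restriction to the subgroup $G\subset G_\CC$ of the honest adjoint representation of $G_\CC$ on its own Lie algebra. This change of viewpoint is the only real idea in the proof: it turns a representation of the compact group $G$ into the adjoint representation of the bigger group $G_\CC$, to which Proposition~\ref{universal-closed} directly applies.

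Next I would transfer the universality hypothesis from $G$ to $G_\CC$. For any $v\in\frg_\CC$ the $G$-orbit through $v$ is contained in the $G_\CC$-orbit through $v$, since $G\subset G_\CC$. As $\hh$ is universal for $G$, the set $(G\cdot v)\cap\hh$ is nonempty, and a fortiori the $G_\CC$-orbit of $v$ meets $\hh$. Therefore $\hh$ is universal for the adjoint representation of $G_\CC$ as well.

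With this in hand, $\hh$ is a complex, hence in particular real, Lie subalgebra of $\lie(G_\CC)$ that is universal for the adjoint action of the connected Lie group $G_\CC$. Proposition~\ref{universal-closed} then applies verbatim and produces a closed connected subgroup $H\subset G_\CC$ with $\lie(H)=\hh$ and $H=N_{G_\CC}(\hh)^\circ=N_{G_\CC}(H)^\circ$. Finally, because $\hh$ is a \emph{complex} subalgebra of the Lie algebra of the complex Lie group $G_\CC$, the connected virtual subgroup integrating $\hh$ is a complex Lie subgroup; being closed by the above, $H$ is a closed connected complex Lie subgroup, which is the assertion.

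I do not anticipate a genuine obstacle, consistent with the corollary being labelled ``immediate''. The two points that merit a sentence of justification are the orbit-containment argument used to push universality from $G$ up to $G_\CC$, and the standard fact that a complex subalgebra of $\lie(G_\CC)$ integrates to a complex subgroup; both are routine once the identification $\frg_\CC\cong\lie(G_\CC)$ is fixed.
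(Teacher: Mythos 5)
Your proof is correct and is exactly the paper's intended argument: the paper labels the corollary ``immediate from Proposition~\ref{universal-closed}'', and the reduction you spell out (orbit containment pushes universality from $G$ up to the connected group $G_\CC$, then Proposition~\ref{universal-closed} applied to the adjoint representation of $G_\CC$, with complexity of $H$ coming from $\hh$ being a complex subalgebra) is precisely the route the paper itself rehearses later in the proof of Theorem~\ref{complexified-1}. Note only that you tacitly correct a typo in the statement: the closed complex subgroup $H$ lives in $G_\CC$, not in the compact group $G$, as the formula $H=N_{G_\CC}(\hh)^\circ$ already indicates.
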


  Recall that a parabolic subgroup $P$ of the complex reductive algebraic group $G_\CC$ is a connected closed subgroup in the Zariski topology, for which the quotient space $G_\CC/P$ is a projective complex algebraic variety. A subgroup $P$ of $G$ is a parabolic subgroup if and only if it contains some Borel subgroup of the group $G$ (cf. Corollary in \S 11.2 of \cite{Bo})

  \begin{corollary}\label{tangent-bundle-2} Let $G$ be a compact connected Lie group, $\hh$ any complex Lie subalgebra of $\frg_\mathbb C$ containing a Borel subalgebra. Let $W=\frg_\CC/\hh$, $G_\hh=N_G(\hh)$. Then
\begin{enumerate}
	\item ${\rm dim}(G/G_\hh)={\rm rank}_\mathbb R(W|_\mathbb R)$.
		
\item The top Chern class of the associated bundle $E_W$ on $G/G_\hh$ is also nonzero.
  \end{enumerate}
      \end{corollary}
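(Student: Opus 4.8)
The plan is to exploit the hypothesis that $\hh$, containing a Borel subalgebra, is a \emph{parabolic} subalgebra. Let $\bb = {\rm Lie}(B)$ be the Borel subalgebra contained in $\hh$, with $B$ a Borel subgroup of $G_\CC$ and $T = G\cap B$ the corresponding maximal torus, and set $P := N_{G_\CC}(\hh)$. Since parabolic subalgebras are self-normalizing, $P$ is a parabolic subgroup of $G_\CC$ with ${\rm Lie}(P)=\hh$, it is connected, it contains $B$, and $G_\CC/P$ is a compact (projective) complex manifold. I would first record the identification $G_\hh = N_G(\hh) = G\cap P$, which is immediate because both sides equal $\{g\in G: {\rm Ad}_g\hh\subset\hh\}$. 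Combined with the parabolic analogue of the diffeomorphism in (\ref{gmodt}), namely $G/(G\cap P)\isomor G_\CC/P$, this exhibits $G/G_\hh$ as a compact homogeneous space carrying a complex structure with holomorphic tangent space $\frg_\CC/\hh = W$ at the base point.

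Part (1) is then a dimension count: $\dim_\RR(G/G_\hh) = \dim_\RR(G_\CC/P) = 2\dim_\CC(G_\CC/P) = 2\dim_\CC(\frg_\CC/\hh) = 2\dim_\CC W = {\rm rank}_\RR(W|_\RR)$. This proves (1) and simultaneously verifies that the dimension requirement (\ref{dimension}) holds.

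For part (2), the geometric heart is that $E_W$ is the holomorphic tangent bundle of $G_\CC/P$. To establish this I would analyze the real-linear map $\frg\hookrightarrow\frg_\CC\twoheadrightarrow\frg_\CC/\hh$: its kernel is $\frg\cap\hh = {\rm Lie}(G_\hh)$, and it is surjective because $\hh\supset\bb$ together with the second isomorphism in (\ref{gmodt}) gives $\frg+\bb=\frg_\CC$, hence $\frg+\hh=\frg_\CC$. This identifies the real isotropy representation $\frg/(\frg\cap\hh)$ of $G_\hh$ with $W|_\RR$ equivariantly, so that $T(G/G_\hh)$ is the underlying real bundle of $E_W$; transporting the complex structure from $G_\CC/P$ shows $E_W$ is exactly the holomorphic tangent bundle. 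Consequently $\langle c_{\rm top}(E_W),[G/G_\hh]\rangle = \chi(G_\CC/P) = \chi(G/G_\hh)$, and it remains to show this Euler characteristic is positive. Here I would argue as in Theorem \ref{tangent-bundle-1}: since $G_\hh = G\cap P \supset G\cap B = T$, the group $G_\hh$ contains a maximal torus; moreover $G_\hh$ is connected, because applying the homotopy exact sequence to the fibration $G_\hh\to G\to G/G_\hh\cong G_\CC/P$ and using that the flag variety $G_\CC/P$ is simply connected forces $\pi_0(G_\hh)=0$. With $G_\hh$ a closed connected subgroup of maximal rank, Hopf–Samelson gives $\chi(G/G_\hh)>0$, so the top Chern class is nonzero.

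The step I expect to be the main obstacle is the precise identification of $E_W$ with the holomorphic tangent bundle of $G_\CC/P$, in particular the verification $\frg+\hh=\frg_\CC$ and the matching of the $G_\hh$-equivariant structures; the naive expectation that $E_W$ is the real tangent bundle of $G/G_\hh$ is correct only because of this surjectivity, which uses $\hh\supset\bb$ essentially. As an alternative route for part (2) that sidesteps the bundle identification, one may note that $\bb$ is universal for $G$ by Corollary \ref{schur}, hence universal for $G_\CC$ (every $G_\CC$-orbit contains a $G$-orbit, which meets $\bb$), and therefore so is the larger subspace $\hh$; applying Theorem \ref{holomorphic-case-1} to the holomorphic adjoint action of $G_\CC$ on $\frg_\CC$, with $G_\CC/P$ compact and (\ref{dimension}) already checked in part (1), yields $C_V\neq 0$ directly.
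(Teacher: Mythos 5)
Your proof is correct, and its geometric skeleton coincides with the paper's: both identify $G/G_\hh$ with the generalized flag variety $G_\CC/P$, deduce the surjectivity $\frg+\hh=\frg_\CC$ from $\hh\supset\bb$ together with (\ref{gmodt}) (this is indeed the crux, exactly as you predicted), identify $E_W$ with the tangent bundle, and conclude via positivity of the Euler characteristic. The genuine difference is how the parabolic subgroup is produced. You import standard structure theory as a black box: $P=N_{G_\CC}(\hh)$ is the connected, self-normalizing parabolic subgroup with ${\rm Lie}(P)=\hh$, whence $G_\hh=G\cap P$ immediately. The paper instead manufactures $P$ from its own machinery: Corollary \ref{schur} makes $\hh$ universal, Corollary \ref{universal-closed-complex} (i.e.\ Proposition \ref{universal-closed}, the paper's key observation) produces a closed connected $H$ with $H=N_{G_\CC}(\hh)^\circ$, algebraicity of the stabilizer plus the Zariski-versus-Euclidean connectedness theorem of \cite{S} shows $H$ is algebraic, hence parabolic since it contains $B$, and Borel's self-normalization theorem (Theorem 11.16 of \cite{Bo}) finally gives $(G_\CC)_\hh=H$. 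Your route is shorter and gets connectedness for free --- your $\pi_1$ argument that $G_\hh$ is connected, using simple connectedness of $G_\CC/P$, is a pleasant addition the paper leaves implicit --- while the paper's route is self-contained and showcases its universality-implies-closedness theme. For the endgame, your main route (maximal rank, Hopf--Samelson \cite{H-S}, and $\langle e(T(G/G_\hh)),[G/G_\hh]\rangle=\chi(G/G_\hh)$) is exactly what the paper packages as Theorem \ref{tangent-bundle-1}, and your alternative route is literally the paper's other cited option, Theorem \ref{holomorphic-case-1}. Two micro-points to write out if you formalize this: the ``parabolic analogue of (\ref{gmodt})'' needs the one-line remark that transitivity of $G$ on $G_\CC/B$ descends to the further quotient $G_\CC/P$ (as the paper does), and in the alternative route Theorem \ref{holomorphic-case-1} concerns the bundle over $G_\CC/(G_\CC)_\hh$, so you still need self-normalization, $(G_\CC)_\hh=P$, and the diffeomorphism from your main route to transport the conclusion to $G/G_\hh$.
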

\proof Suppose $\hh$ contains a Borel subalgebra $\bb$. By Schur's triangularization above, $\hh$ is universal for the complexified adjoint representation of $G$, hence also for the adjoint representation of $G_\CC$. By Corollary \ref{universal-closed-complex}, $\hh$ is the Lie algebra of a \emph{closed} connected complex analytic subgroup of $H\subset G_\mathbb C$, with $H=(G_\CC)_\hh^\circ$. Putting $\hh_\mathbb R=\frg\cap \hh$, $H_\mathbb R=G\cap H$, we have $\hh_\mathbb R={\rm Lie}(H_\mathbb R^\circ)$. Since the adjoint 
 representation $G_\mathbb C\to \frg l(\frg_\mathbb C)$ is algebraic, the stabilizer $(G_\mathbb C)_\hh$ is an \emph{algebraic subgroup}. But then the identity component
  in the Zariski topology will be connected in Euclidean topology (\cite{S} VII. 2.2 Theorem 1), 
  so $H=(G_\mathbb C)_\hh^\circ$ must be an algebraic subgroup of $G_\mathbb C$. Therefore $H$ is a parabolic subgroup of $G_\CC$, hence $H=N_{G_\CC}(H)$ (Theorem 11.16 of Borel \cite{Bo}). But $H$ is connected, so $(G_\CC)_\hh=N_{G_\CC}(H)=H$. This means $H_\RR=G\cap H=G\cap (G_\CC)_\hh=G_\hh$. Now since $\frg\to \frg_\mathbb C/\bb$ is surjective, $\frg\to \frg_\mathbb C/\hh$ is also surjective, and hence we get the following isomorphism of real vector spaces, equivariant under $H_\mathbb R$.\begin{equation}\label{gmodh-1}\frg/\hh_\mathbb R\isomor \frg_\mathbb C/\hh.\end{equation}
 We have already shown $H_\RR=G_\hh$, so this proves $(1)$.

Also, since $G$ acts transitively on $G_\mathbb C/B$, $G$ must act transitively on the further quotient $G_\mathbb C/H$, the stabilizer of $[H]\in G_\CC/H$ being $H_\mathbb R$. Thus, we have a $G$-equivariant isomorphism \begin{equation}\label{gmodh}G/H_\mathbb R\isomor G_\mathbb C/H.\end{equation}

Now since $H$ contains $B$, $H_\mathbb R$ contains $T$. If $W=\frg/\hh_\mathbb R$ with the induced action of $H_\mathbb R$, then $E_W\cong T(G/H_\mathbb R)$. If $W'=\frg_\CC/\hh$ with the induced action of $H$, then $E_{W'}\cong T(G_\mathbb C/H)$. Under (\ref{gmodh}) and (\ref{gmodh-1}), the bundle $E_W$ is isomorphic to $E_{W'}$.  The Euler class of this bundle must not vanish by Theorem \ref{tangent-bundle-1} or Theorem \ref{holomorphic-case-1}.\hfill$\Box$

\vskip 1em

 \begin{theorem}\label{complexified-1} Let $G$ be compact connected Lie group and let $G_\mathbb C$ be its complexification. For a complex Lie subalgebra $\hh\subset \frg_\mathbb C$, the following are equivalent.
 	
 	\begin{enumerate}
 		\item $\hh$ is universal in $\frg_\CC$ for the complexified adjoint action of $G$.
 		
 		\item $\hh$ contains a Borel subalgebra of $\frg_\CC$.

	\item $\hh$ is the Lie algebra of a parabolic subgroup $H\subset G_\mathbb C$ and the natural map $G/(G\cap H)\to G_\mathbb C/H$ is a diffeomorphism.
	
	\item The associated vector bundle $E_W$ on $G/G_\hh$, where $W=\frg_\CC/\hh$, has nonvanishing top Chern class.
\end{enumerate}
\end{theorem}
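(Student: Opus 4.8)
The plan is to prove the four conditions equivalent by closing a cycle, extracting three of the implications out of $(2)$ from results already in place and then supplying the one genuinely new arc $(1)\Rightarrow(2)$.

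\emph{The implications that come for free.} First, $(2)\Rightarrow(1)$ is Schur's triangularization (Corollary \ref{schur}): if $\hh$ contains a Borel subalgebra $\bb$, then every orbit already meets $\bb\subseteq\hh$. The implications $(2)\Rightarrow(3)$ and $(2)\Rightarrow(4)$ are precisely what is established inside the proof of Corollary \ref{tangent-bundle-2}: assuming $\hh\supseteq\bb$, the group $H=(G_\CC)_\hh^\circ$ is parabolic with $\mathrm{Lie}(H)=\hh$, the natural map $G/(G\cap H)\to G_\CC/H$ is a diffeomorphism (as in (\ref{gmodh})), and the top Chern class of $E_W$ is nonzero. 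Conversely $(3)\Rightarrow(2)$ is immediate, since a parabolic subgroup contains a Borel subgroup, so $\hh=\mathrm{Lie}(H)$ contains a Borel subalgebra. Finally $(4)\Rightarrow(1)$ follows from the generalities recalled at the start of this section: the top Chern class equals the Euler class of the underlying real bundle, a nonzero Euler class forbids any nowhere vanishing section of $E_W$, so in particular every constant section $s_u$ must vanish somewhere, which is exactly universality. Hence only $(1)\Rightarrow(2)$ remains, and the chain $(2)\Rightarrow(4)\Rightarrow(1)\Rightarrow(2)$ together with $(2)\Leftrightarrow(3)$ will give the theorem.

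\emph{Step A: a universal subalgebra contains a Cartan subalgebra of $\frg_\CC$.} By Corollary \ref{universal-closed-complex}, a universal $\hh$ is the Lie algebra of the closed connected subgroup $H=(G_\CC)_\hh^\circ$. Fix a maximal torus $T\subseteq G$ and a topological generator, i.e. $A\in\frt$ with $\overline{\exp(\RR A)}=T$. Viewing $A\in\frg\subseteq\frg_\CC$, universality produces $k\in G$ with $\mathrm{Ad}_kA\in\hh$. Since $\mathrm{Ad}_kA\in\hh=\mathrm{Lie}(H)$, the one-parameter subgroup $t\mapsto k\exp(tA)k\inv=\exp(t\,\mathrm{Ad}_kA)$ lies in $H$, and as $H$ is closed its closure $kTk\inv$ lies in $H$; hence $\mathrm{Ad}_k\frt\subseteq\hh$. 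Because $\hh$ is a \emph{complex} subspace it also contains $i\,\mathrm{Ad}_k\frt$, and therefore the full Cartan subalgebra $\mathrm{Ad}_k\frt_\CC=\mathrm{Ad}_k(\frt\oplus i\frt)$ of $\frg_\CC$. Replacing $\hh$ by the $G$-conjugate $\mathrm{Ad}_{k\inv}\hh$ (which alters neither universality nor the property of containing a Borel), I may assume $\frt_\CC\subseteq\hh$.

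\emph{Step B: from a Cartan subalgebra to a Borel subalgebra.} Now $\hh$ is $\mathrm{ad}(\frt_\CC)$-stable, so $\hh=\frt_\CC\oplus\bigoplus_{\alpha\in S}\frg_\alpha$ for a closed set of roots $S\subseteq\Phi$, and $\hh\supseteq\bb$ for some Borel is equivalent to $S$ containing a system of positive roots (equivalently $S\cup(-S)=\Phi$, i.e. $H$ parabolic). Universality forces the regular (principal) nilpotent orbit to meet $\hh$, so $\hh$ contains a regular nilpotent $n$; projecting along the $\frt_\CC$-grading shows $\frg_\alpha\subseteq\hh$ for every root $\alpha$ in the support of $n$. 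The remaining claim is that the roots occurring in $\hh$ must contain a full positive system; this is exactly the point that excludes the maximal-rank reductive (Borel–de Siebenthal) subalgebras, which meet every semisimple orbit but miss the regular nilpotent orbit. Concretely, were $S\cup(-S)\neq\Phi$, the symmetric part $S\cap(-S)$ would cut out a \emph{proper} Levi-type reductive subalgebra forced to contain a regular nilpotent of $\frg_\CC$, contradicting the classical fact that the regular nilpotent orbit meets no proper Levi subalgebra. Thus $S$ contains a positive system, $\hh\supseteq\bb$, and $(1)\Rightarrow(2)$ follows.

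I expect Step B to be the main obstacle. The ``free'' implications and Step A are essentially bookkeeping on top of Corollaries \ref{schur}, \ref{tangent-bundle-2}, and \ref{universal-closed-complex}, whereas the passage from maximal rank to parabolicity is where universality must be used in an essential, nilpotent-sensitive way — maximal rank alone does not suffice, as the Borel–de Siebenthal subgroups show. The delicate verification is to convert ``$\hh$ contains some regular nilpotent'' into the combinatorial statement that the roots of $\hh$ contain a positive system, which I would carry out by applying the fact that the principal nilpotent orbit avoids every proper Levi subalgebra to the reductive part $\frt_\CC\oplus\bigoplus_{\alpha\in S\cap(-S)}\frg_\alpha$ of $\hh$.
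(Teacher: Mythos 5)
Your framework and most arcs are sound: $(2)\Rightarrow(1)$ is Corollary \ref{schur}, $(2)\Rightarrow(3)$ and $(2)\Rightarrow(4)$ are indeed contained in Corollary \ref{tangent-bundle-2} and its proof (including the diffeomorphism (\ref{gmodh})), $(3)\Rightarrow(2)$ and $(4)\Rightarrow(1)$ are as you say, and Step A is correct --- it is the same topological-generator argument as in $(2)\Rightarrow(1)$ of Theorem \ref{tangent-bundle-1}, and conjugation by $k\in G$ is harmless. The genuine gap is in Step B, at precisely the point you flagged. Write $\hh=\mathfrak{l}\oplus\mathfrak{n}_u$ with $\mathfrak{l}=\frt_\CC\oplus\bigoplus_{\alpha\in S\cap(-S)}\frg_\alpha$ and $\mathfrak{n}_u=\bigoplus_{\alpha\in S\setminus(-S)}\frg_\alpha$ (the latter is an ideal by closedness of $S$). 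Your regular nilpotent $n\in\hh$ need not lie in $\mathfrak{l}$: it generally has components in $\mathfrak{n}_u$, and its image in $\hh/\mathfrak{n}_u\cong\mathfrak{l}$ is nilpotent in $\mathfrak{l}$ but there is no reason for it to remain \emph{regular} in $\frg_\CC$ --- regularity is exactly what can be destroyed by discarding the $S\setminus(-S)$ components (note also that a nilpotent element may well have support whose convex hull contains $0$, as $h-e+f$ in $\mathfrak{sl}_2$ shows, so no positivity can be read off the support). Hence the reductive part is not ``forced to contain a regular nilpotent,'' and no contradiction is reached. Moreover, even if it were, the classical fact you invoke is about Levi subalgebras (centralizers of tori), while $\mathfrak{l}$ is only a maximal-rank reductive subalgebra; by Borel--de Siebenthal such subalgebras are typically \emph{not} Levis (the long-root $\mathfrak{sl}_2\oplus\mathfrak{sl}_2$ in $\mathfrak{sp}_4$, or $\mathfrak{so}_{2n}\subset\mathfrak{so}_{2n+1}$) --- and these are exactly the subalgebras your argument must exclude. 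What you actually need is the stronger, unproved statement that a closed subset $S$ with $S\cup(-S)\neq\Phi$ gives a subalgebra containing no regular nilpotent of $\frg_\CC$; your citation does not deliver it.

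For comparison, the paper closes the cycle by a completely different and combinatorics-free route, proving $(1)\Rightarrow(3)$ directly: universality makes $G\times\hh\to\frg_\CC$ surjective, hence $E_\hh\to\frg_\CC$ is surjective, and Sard's theorem yields $\dim_\RR G/(G\cap H)\ge \dim_\RR G_\CC/H$; since the adjoint representation of $G_\CC$ is algebraic, $H=(G_\CC)_\hh^\circ$ is an algebraic subgroup, so the injective immersion $G/(G\cap H)\to G_\CC/H$ from a compact manifold is a diffeomorphism by Lemma \ref{immersion}; thus $G_\CC/H$ is compact and quasi-projective, hence projective, i.e.\ $H$ is parabolic, and then $(3)\Rightarrow(4)\Rightarrow(1)$ closes the loop via Corollary \ref{tangent-bundle-2}. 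The simplest repair of your proposal is to replace Step B by this dimension-count argument (Step A then becomes unnecessary); alternatively, you would have to genuinely prove the root-theoretic lemma that a closed $S$ whose subalgebra $\frt_\CC\oplus\bigoplus_{\alpha\in S}\frg_\alpha$ contains a regular nilpotent satisfies $S\cup(-S)=\Phi$, and then quote the standard fact (Bourbaki) that parabolic subsets of $\Phi$ contain a positive system.
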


Before we prove Theorem \ref{complexified-1} we mention the following lemma, which is obvious but useful.

\begin{lemma}\label{immersion} Let $M$ be a smooth connected manifold of dimension $n$ and let $i:K\to M$ be an injective immersion, where $K$ is a compact connected manifold of dimension $n$. Then $i$ is a diffeomorphism.\end{lemma}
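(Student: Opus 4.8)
The plan is to use the equality of dimensions to promote the immersion $i$ to a local diffeomorphism, and then to deduce from compactness and connectedness that $i$ is a diffeomorphism onto all of $M$. First I would observe that since $\dim K=\dim M=n$ and $i$ is an immersion, each differential $di_x:T_xK\to T_{i(x)}M$ is an injective linear map between vector spaces of the same dimension $n$, and is therefore a linear isomorphism at every point $x\in K$. By the inverse function theorem, $i$ is thus a local diffeomorphism; in particular $i$ is an open map.

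Next I would establish surjectivity by a connectedness argument. Since $K$ is compact and $M$ is Hausdorff, the image $i(K)$ is compact, hence closed in $M$. On the other hand, because $i$ is an open map, $i(K)$ is open in $M$. As $M$ is connected and $i(K)$ is a nonempty subset that is simultaneously open and closed, we must have $i(K)=M$, so $i$ is surjective.

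Finally, combining surjectivity with the injectivity hypothesis shows that $i$ is a continuous bijection from the compact space $K$ onto the Hausdorff space $M$, hence a homeomorphism. A map that is at once a local diffeomorphism and a homeomorphism is a global diffeomorphism, and the proof is complete. I do not anticipate any genuine obstacle here: every step is a standard consequence of the inverse function theorem together with the elementary topology of compact and connected spaces, which is presumably why the authors call the statement ``obvious.'' The only points deserving attention are that the conclusion uses all three hypotheses essentially---compactness of $K$ for closedness of the image and for the homeomorphism step, equality of dimensions for the local diffeomorphism step, and connectedness of $M$ for the clopen argument.
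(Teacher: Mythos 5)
Your proof is correct and complete: the inverse function theorem upgrades the equal-dimensional immersion to a local diffeomorphism, the image is clopen (compact hence closed, open since local diffeomorphisms are open) and therefore all of the connected $M$, and a continuous bijection from a compact space to a Hausdorff space is a homeomorphism, which together with local diffeomorphy gives a global diffeomorphism. Note that the paper supplies no proof at all, declaring the lemma ``obvious but useful,'' so your argument is exactly the standard one the authors presumably had in mind; your closing remark is also accurate, except that connectedness of $K$ is never actually used (it follows a posteriori from $K\cong M$), only connectedness of $M$ is needed for the clopen step.
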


\vskip 2em
\noindent\emph{Proof of Theorem \ref{complexified-1}}:
$(4)\Rightarrow (1)$ is obvious. 

$(3)\Rightarrow (4):$ In this case $\hh$ is parabolic. Then the implication follows from Corollary \ref{tangent-bundle-2}.

$(1)\Rightarrow (3):$
Since
 $\hh$ is universal for $G$, 
 the map $G\times \hh\to \frg_\mathbb C$ is surjective, which means the 
 map $E_\hh\to \frg_\mathbb C$ is surjective, where $E_\hh=(G\times\hh)/G^\circ_\hh$.
  Now, since $\hh$ must also be universal for $G_\mathbb C$, by
  Corollary \ref{universal-closed-complex} (see also Proposition \ref{universal-closed}), $\hh$ is the Lie algebra of a closed complex Lie subgroup $H$ such that $H=N_{G_\mathbb C}(H)^\circ$. Since the adjoint 
 representation $G_\mathbb C\to \mathfrak{gl}(\frg_\mathbb C)$ is algebraic, the stabilizer $(G_\mathbb C)_\hh$ is an \emph{algebraic subgroup}. But then the identity component
  in the Zariski topology will be connected in Euclidean topology (\cite{S} VII. 2.2 Theorem 1), 
 so $H=(G_\mathbb C)_\hh^\circ$ must be an algebraic subgroup of $G_\mathbb C$. 
  Again, $G_\hh=G\cap (G_\mathbb C)_\hh=G\cap N_{G_\mathbb C}(H)$, so we have
  $$G_\hh^\circ= (G\cap N_{G_\mathbb C}(H)^\circ)^\circ=(G\cap H)^\circ.$$
   But by surjectivity of $E_\hh\to \frg_\mathbb C$, we know that
	\[\dim_\RR(G)-\dim_\RR(G\cap H)^\circ+\dim_\RR(\hh)=\dim_\RR(E_\hh)\ge \dim_\RR(G_\mathbb C),\]
\[{\rm or},\;\dim_\RR(G/G\cap H)\ge \dim_\RR(G_\mathbb C/H).\]

 Since $G/G\cap H$ is compact and the map $G/G\cap H\to G_\mathbb C/H$ is an injective immersion, they must have the same dimension, so by Lemma \ref{immersion}, $G/G\cap H\to G_\mathbb C/H$ is a diffeomorphism, proving that $G_\mathbb C/H$ is compact. Since $H$ is an algebraic subgroup of $G_\mathbb C$, the quotient space $G_\mathbb C/H$ is quasi-projective, or in other words, it is open in an irreducible projective variety $M$. By \cite{S} VII. 2.2 Theorem 1, $M$ is connected in Euclidean topology. Since $G_\mathbb C/H$ is compact, it is equal to $M$. This shows that $H$ is parabolic. 

$(3)\Rightarrow(2):$ Follows from the definition of a parabolic subgroup. 

$(2)\Rightarrow(1):$ This follows from the general Schur's triangularization (Corollary \ref{schur}).
 \hfill$\Box$

\section{Universality does not imply nonzero Euler class}\label{counter}

First we make preparatory observations. Consider the complexified adjoint representation of $G=SU(2)$ on $U=\frg_\mathbb C=\mathfrak{su}(2)\otimes_\RR \CC\cong \mathfrak{sl}(2,\CC)$. Let $V=\bb$ be the standard Borel subalgebra of $\frg_\mathbb C$, that is, $\bb$ consists of upper triangular traceless matrices in $\mathfrak{sl}(2,\CC)$. Let $W=U/V$. Then $G_V=T$, the standard diagonal (maximal) torus in $SU(2)$, $G/G_V\cong S^2$, $E_W\cong T(S^2)$. Indeed, for the adjoint representation of $G_\mathbb C=SL(2,\CC)$ on its Lie algebra $U$, $(G_{\mathbb C})_V=B$, and $G/G_V\to G_\mathbb C/(G_{\mathbb C})_V$ is a $G$-equivariant isomorphism, while $E_W$ as a bundle on $G_\mathbb C/(G_{\mathbb C})_V$ is isomorphic to its tangent bundle.

By Corollary 14.4 of \cite{B}, if $E$ is an orientable vector bundle on a compact manifold $M$, ${\rm rank}(E)=\dim(M)$, then the Euler class $e(E)$ is the final obstruction class to having a nowhere vanishing section of $E$. In case $E$ is the underlying real vector bundle of a complex vector bundle $E'$, the Euler class $e(E)$ is equal to the top Chern class $c_{\rm top}(E')$.

In what follows, we construct an example of a compact Lie group $G$, a complex representation $U$, a universal complex linear subspace $V$, such that the dimension condition (\ref{dimension}) holds, but the Euler class of $E_W$ over $G/G_V$ is zero.

Let $G_1=G_2=SU(2)$ and $G=G_1\times G_2$. Let $U_1=U_2=\mathfrak{sl}(2,\CC)$, and consider $U=U_1\oplus U_2={\rm Lie}(G)_\CC$ with the complexified adjoint action of $G$. Let $V_2=\left\{\begin{pmatrix} 0 & *\\ * & 0\end{pmatrix}\right\}$ be the set of zero diagonal matrices in $U_2$, and let $V_1=\bb$ be the standard Borel in $U_1$. Set $V=V_1\oplus V_2\subset U$, $W_i=U_i/V_i$ for $i=1,2$, $W=U/V=W_1\oplus W_2$. Then $G_V=T\times N_G(T)$, and $V$ is universal in $U$. Now, $G/G_V\cong S^2\times \RR P^2$. We will denote the two projections by $p_1,p_2$ respectively. Consider the vector bundles $E_1=E_{W_1}$ on $S^2$ and $E_2=E_{W_2}$ on $\RR P^2$. Then the mod $2$ reduced Chern class of $E_2$ is the generator of $H^2(\RR P^2,\ZZ/2)=\ZZ/2=H^2(\RR P^2,\ZZ)$
as observed in the counterexample of Jinpeng and Dokovi\'{c} (\S 6, \cite{A-D1}).

 On the other hand, since $E_1$ is the tangent bundle on $S^2$, its Chern class is twice the generator of $H^2(S^2,\ZZ)$. 
We will show below that the top Chern class of the bundle $E_W=p_1^*E_1\oplus p_2^*E_2$ is zero. By the Whitney sum formula, $c_2(E)=p_1^*c_1(E_1)\cup p_2^*c_1(E_2)$, which is the image of $c_1(E_1)\otimes c_1(E_2)$ under the cross product map
$$H^2(S^2,\ZZ)\otimes H^2(\RR P^2,\ZZ)\to H^4(G/G_V,\ZZ): (a,b)\mapsto p_1^*a\cup p_2^*b.$$ But  $H^2(S^2,\ZZ)\otimes H^2(\RR P^2,\ZZ)\cong\ZZ\otimes_\ZZ \ZZ/2\ZZ\cong \ZZ/2\ZZ$. Since $c_1(E_1)$ is twice the generator of $H^2(S^2,\ZZ)$, $c_2(E)=0$.

 \section{Universality of irreducible representations of $SU(2)$}
Let $T$ be the standard torus in $SU(2)$. For any irreducible representations of $SU(2)$, we consider all the $T$-invariant complex linear subspaces $V$ which satisfy the dimension requirement (\ref{dimension}), and show that they are universal. We give a concrete description of these subspaces give a necessary and sufficient condition for the Euler class of $E_W$ to vanish.

 \begin{proposition} \label{SU(2)}
 	Let $G=SU(2)$, $T\subset G$ a fixed maximal torus, and let $\rho_n:G\to GL(U_n)$
 	 be an irreducible unitary finite dimensional complex representation of $G$ with ${\rm rank}_\CC(U_n)=n+1$.
 	
 	\begin{enumerate}
 		\item The only $T$-invariant complex subspaces $V\subset U_n$ such that the dimension requirement (\ref{dimension}) holds are hyperplanes, and they are universal.
 		
 		\item There are exactly $n+1$ many choices for $V$ (and their complements $W$). We can give concrete descriptions of them.
 		
 		\item The Chern class of $E_W$ over $G/G_V$ vanishes if and only if $n=4m$ and $T$ acts trivially on $W$.
 	\end{enumerate}

   \end{proposition}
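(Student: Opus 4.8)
The plan is to analyze the irreducible representation $\rho_n$ through its weight-space decomposition under $T$, since everything in the statement is phrased in terms of $T$-invariance and the bundle $E_W$ over $G/G_V \cong S^2$. Recall that $U_n$ is the $(n{+}1)$-dimensional irreducible representation of $SU(2)$, whose weights under the standard torus $T$ are $n, n-2, \dots, -n+2, -n$, each occurring with multiplicity one. I would fix a weight basis $e_0,\dots,e_n$, where $e_j$ spans the weight-$(n-2j)$ line. A complex subspace $V$ is $T$-invariant precisely when it is a sum of weight lines, so the $T$-invariant subspaces are exactly the coordinate subspaces $V = \bigoplus_{j\in S} \CC e_j$ for $S\subset\{0,\dots,n\}$. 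The first task is to pin down which of these satisfy the dimension requirement (\ref{dimension}). Here $G/G_V$ has real dimension $2$ (as $T\subsetneq N_G(T)$ forces $G_V$ to contain $T$, and by irreducibility a proper $T$-invariant $V$ cannot be $N_G(T)$-invariant, so $G_V = T$ and $G/G_V \cong S^2$), hence $\mathrm{rank}_\RR(W|_\RR) = 2$ means $\dim_\CC W = 1$, i.e. $V$ is a hyperplane. This gives part (1): the only candidates are the $n+1$ coordinate hyperplanes $V_j = \bigoplus_{k\neq j}\CC e_k$, giving part (2) as well, with $W \cong \CC e_j$ carrying weight $n-2j$.

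For universality in part (1), I would either invoke the Gichev/Galindo--de la Harpe--Vust result that every complex hyperplane in a finite-dimensional complex irreducible representation of a compact connected group is universal (cited in the introduction), or give a direct Euler-class / degree argument: since $W = U_n/V_j$ is the one-dimensional weight-$(n-2j)$ representation of $T$, the line bundle $E_W$ on $S^2 = G/T \cong \mathbb{P}^1$ is the associated bundle $G\times_T \CC_{n-2j}$, which is $\OO(n-2j)$ up to sign convention. Its top (first) Chern class is $(n-2j)$ times the generator of $H^2(S^2,\ZZ)$. When $n-2j \neq 0$ this is nonzero, so by the Euler-class criterion ($\mathrm{rank} = \dim$, Corollary 14.4 of \cite{B}, as recalled at the start of \S2) $V_j$ is universal. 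The only subtle case is the zero-weight hyperplane (possible exactly when $n$ is even, $j = n/2$), where the Chern class vanishes; there universality must be established separately, and this is where I would fall back on the general hyperplane universality theorem rather than the Euler class.

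For part (3), the computation of the Chern class of $E_W$ over $G/G_V$ reduces to identifying the weight of $T$ on $W$ and tracking the effect of passing from $G/T = S^2$ to $G/G_V$. The key point is that $G_V$ need not equal $T$: although $V_j$ is $T$-invariant, it may also be invariant under the nontrivial element of the Weyl group $N_G(T)/T$, which swaps the weight-$(n-2j)$ line with the weight-$(2j-n)$ line. This symmetry identifies $e_j$ with $e_{n-j}$; the hyperplane $V_j$ is $N_G(T)$-invariant (forcing $G_V$ strictly larger than $T$, so $G/G_V \cong \RR P^2$ rather than $S^2$) exactly when the Weyl reflection preserves $\CC e_j$, i.e. when $j = n-j$, namely $n$ even and $j = n/2$, the zero-weight case. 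I expect this to be the main obstacle and the crux of part (3): I would show that over $G/G_V \cong \RR P^2$ the bundle $E_W$ has Chern class reduction governed by $H^2(\RR P^2,\ZZ) = \ZZ/2$, and that the integer Chern class over the double cover $S^2$ equals the weight $n - 2j$. Combining, the Chern class over $G/G_V$ vanishes precisely when $T$ acts trivially on $W$ (zero weight, i.e. $n = 2j$) \emph{and} the residual reduction mod the covering vanishes; writing $n = 2j$ and tracking the mod-$2$ obstruction as in the $\RR P^2$ analysis of \S\ref{counter} should yield the refined condition $n = 4m$. The delicate bookkeeping is exactly the interplay between the trivial $T$-weight forcing the $S^2$-Chern class to zero and the $N_G(T)$-action forcing the base to be $\RR P^2$, whose $\ZZ/2$ cohomology then supplies the extra parity condition $n\equiv 0 \pmod 4$.
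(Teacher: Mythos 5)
Your overall route is the paper's route: weight-space decomposition of $U_n$ under $T$ (equivalently the paper's monomial model $x^iy^{n-i}$ in homogeneous polynomials), the dimension requirement (\ref{dimension}) forcing $W$ to be a line and $V$ a hyperplane, Gichev's theorem for universality, Chern class equal to the weight $n-2j$ when $G_V=T$, and a separate $\RR P^2$ analysis in the zero-weight case. But there is a genuine gap exactly at the point you yourself flag as ``the crux of part (3).'' For $n=2j$ the whole question is whether the nontrivial Weyl element $w\in N_G(T)/T$ acts on the zero-weight line $W$ by $+1$ or by $-1$: that sign determines whether $E_W$ descends to the trivial complex line bundle on $G/G_V\cong \RR P^2$ (Chern class zero) or is the complexification of the tautological real line bundle, whose mod-$2$ Chern class is $w_1^2\neq 0$ in $H^*(\RR P^2,\mathbb F_2)=\mathbb F_2[w_1]/w_1^3$, hence $c_1(E_W)\neq 0$. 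Your plan never computes this sign; it only asserts that ``tracking the mod-2 obstruction \dots should yield the refined condition $n=4m$.'' That computation is precisely where the condition $n\equiv 0\pmod 4$ comes from: in the polynomial model the Weyl element $w\colon x\mapsto y\mapsto -x$ sends $x^iy^{n-i}\mapsto (-1)^{n-i}x^{n-i}y^i$, so on the zero-weight monomial $x^iy^i$ (with $n=2i$) it acts by $(-1)^i$, and $E_W$ is trivial iff $i$ is even, i.e., $n=4m$. Without this sign your argument cannot distinguish $n=4m$ from $n=4m+2$, so the ``if and only if'' of part (3) is not established.

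Moreover, the mechanism you propose for the zero-weight case --- comparing with ``the integer Chern class over the double cover $S^2$'' --- cannot supply the missing information: when the $T$-weight is zero, the pullback of $E_W$ to $S^2=G/T$ is trivial in \emph{both} subcases, and indeed any class in $H^2(\RR P^2,\ZZ)\cong \ZZ/2$ is torsion and dies under pullback to $H^2(S^2,\ZZ)\cong\ZZ$. One must identify the $N_G(T)$-representation on $W$ directly, as above. A smaller slip: in part (1) you assert that irreducibility prevents a proper $T$-invariant $V$ from being $N_G(T)$-invariant, ``so $G_V=T$''; this is false precisely for the zero-weight hyperplane ($n$ even, $j=n/2$), as your own part (3) later recognizes. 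It is harmless for the dimension count, since $\dim N_G(T)=\dim T$ gives $\dim_\RR G/G_V=2$ either way (the paper argues instead from the fact that the closed subgroups of $SU(2)$ have dimension $0$, $1$, or $3$), but the parenthetical as stated is wrong and contradicts your part (3).
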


 \begin{proof}
 	  We know that $U_n$ is equivalent to the standard representation on the space of complex homogeneous polynomials in $x,y$ of degree $n$. If $V$ is a subspace such that $G_V$ contains the maximal torus, then $G_V$ is of dimension either $1$ or $3$, because the only possible subgroups of $SU(2)$ are of dimension $0,1,3$. Then the only nontrivial case is when $G_V$ is of dimension $1$, which means $T$ is the connected component of $G_V$ i.e. $G_V=T$ or $N_G(T)$ (the connected component has to be a normal subgroup). Fix the Weyl element $w:x\mapsto y\mapsto -x$. Now since the complex subspace $W=V^\perp$ has real rank equal to the dimension of $G/G_V$, which is $2$, $W$ has to be a complex line. This implies $V$ is a hyperplane. Therefore $V$ is universal in $U_n$ by Gichev \cite{G} Corollary 1. This proves $(1)$.

          The only possible $T$-invariant lines are the eigenspaces of $T$ in $U_n$ for different characters. They are precisely the complex lines containing the monomials $x^iy^{n-i}$, hence they are $n+1$ many. Let $W$ be the complex line through the monomial $x^iy^{n-i}$. Thus we can concretely describe $V$ as the space of polynomials $f\in U_n$ such that the coefficient of $x^iy^{n-i}$ in $f$ is zero. This proves $(2)$.

 The maximal torus $T$ acts on the line through the monomial $x^iy^{n-i}$ by the character $t\mapsto t^{2i-n}$. The stabilizer is the set of 
 $g=\begin{pmatrix}\alpha &\beta \\ -\overline{\beta},&\overline{\alpha}\end{pmatrix}\in SU(2)$ 
 such that $gx^iy^{n-i}$ lies in $W$. But this stabilizer is either $T$ or $N_G(T)$, as seen above. The Weyl element $w$ acts as $w\cdot x^iy^{n-i}=(-1)^{n-i}x^{n-i}y^i$, so it does not stabilize $W$ unless $i=n-i$. In case $2i\ne n$, the stabilizer $G_V=T$, $G/G_V\cong S^2$, and it is easy to show that the Chern class of the complex line bundle associated to this character $t\mapsto t^{2i-n}$ is $(2i-n)$ times the generator of $H^2(S,\ZZ)$, which is nonzero. 

 Now come to the case $n=2i$, where $T$ acts trivially on $W$. Here the Weyl element $w$ acts on $W$ trivially or nontrivially according as $i$ is even or odd. In case $w$ acts nontrivially, $G_V=N_G(T)$ and the associated complex line bundle $E_W$ on $G/G_V\cong \RR\PP^2$ is the complexification of the real line bundle which is the tautological real line bundle on $\RR\PP^2$. Since $H^*(\RR\PP^2,\mathbb F_2)=\mathbb F_2[w_1]/w_1^3$, where $w_1$ is the Stiefel-Whitney class of the tautological bundle, the mod 2 reduced Chern class of the complexification would be $w_1^2\ne 0$.  So the Chern class of $E_W$ itself is nonzero.  When $i=2m$ is even, the Weyl element acts trivially on $W$, so $E_W$ descends to the trivial complex line bundle on $G/N_G(T)=\RR P^2$. Therefore the Chern class of $E_W$ is zero. This completes the proof of $(3)$.

\end{proof}

 \begin{remark} By $(3)$ of Proposition \ref{SU(2)}, we get an infinite family of irreducible representations $U_{4m}$ of $G=SU(2)$, $T$-invariant universal subspaces $V_{4m}$ satisfying the dimension requirement (\ref{dimension}), such that the Chern class of $E_{W_{4m}}$ is zero. Here $W_{4m}$ can be concretely identified with the complex line through the monomial $x^{2m}y^{2m}$. This gives an infinite family of counterexamples to the original question posed in \cite{A-D1}. In contrust, the counterexample in \S\ref{counter} was worked out from scratch and did not depend on Gichev's result.
   \end{remark}

\section{Levi subgroups and solvable groups}

\begin{proposition}\label{solvable} Let $G$ be a connected solvable Lie group, $U=\CC^n$, and let $\rho:G\to GL(n,\CC)$ be a representation of $G$ on $U$. Then any complex linear universal subspace $V$ of $U$ has to be equal to $U$.\end{proposition}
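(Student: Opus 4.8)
The plan is to induct on $n=\dim_\CC U$, using Lie's theorem to peel off one dimension at a time. The base case $n=1$ is immediate: the only proper subspace is $\{0\}$, while the orbit of any nonzero $u$ lies in the punctured line $\CC^\times u$ and so never meets $\{0\}$; hence $\{0\}$ is not universal and $V=U$ is forced. For the inductive step, the first thing I would do is manufacture a $G$-invariant hyperplane. The image $d\rho(\frg)\subseteq\frg l(U)$ is a solvable subalgebra (being a quotient of the solvable $\frg$), and $\CC$ is algebraically closed, so Lie's theorem applies. Applying it to the dual representation on $U^*$ produces a common eigenvector $\xi\in U^*$ for $d\rho^*(\frg)$, whose kernel $U':=\ker\xi$ is a $\frg$-invariant, and therefore (since $G$ is connected) $G$-invariant, hyperplane. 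On the one-dimensional quotient $\bar U=U/U'$ the group acts through a character $\chi:G\to\CC^\times$, because each $\rho(g)$ is invertible.

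Next I would split into cases according to whether $V\subseteq U'$. If $V\subseteq U'$, pick any $u\in U$ whose image $\bar u\in\bar U$ is nonzero; then the image of its orbit is $\chi(G)\,\bar u\subseteq\CC^\times\bar u$, which omits $0$, so no translate $\rho(g)u$ lands in $U'$, let alone in $V$. This contradicts universality, and therefore a universal $V$ cannot sit inside $U'$. So I may assume $V\not\subseteq U'$, and now restrict to $U'$: for every $u'\in U'$, universality gives $g$ with $\rho(g)u'\in V$, and $G$-invariance of $U'$ puts $\rho(g)u'\in V\cap U'$. Thus $V\cap U'$ is universal in $U'$ for the restricted action of the same connected solvable group, so the inductive hypothesis yields $V\cap U'=U'$, i.e.\ $U'\subseteq V$. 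Together with $V\not\subseteq U'$ this gives
\[
\dim_\CC V>\dim_\CC U'=n-1,
\]
whence $V=U$, completing the induction.

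The only substantial ingredient is Lie's theorem, so the step needing the most care is the construction of the invariant hyperplane: one must invoke Lie's theorem for the solvable Lie algebra $d\rho(\frg)$ rather than for $G$ itself, and then use the connectedness of $G$ to promote $\frg$-invariance of $U'$ to honest $G$-invariance. After that the argument is elementary — the character $\chi$ rules out the case $V\subseteq U'$, and the restriction to $U'$ feeds the induction. I would also remark that nothing here uses compactness of $G$, and that the conclusion is genuinely special to complex representations, since the behaviour changes for real representations.
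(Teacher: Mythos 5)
Your proof is correct, but it takes a genuinely different---and in fact lighter---route than the paper's. The paper does not induct directly on the given real group: it first reduces to the case of a \emph{complex simply connected} solvable group, invoking Ado's theorem to produce a group $G_{sc}$ with Lie algebra $\frg_\CC$, integrating $d\rho\otimes_\RR 1_\CC$ to a representation of $G_{sc}$, and transferring universality through $\tilde G\to G_{sc}$; only then does it apply Lie's theorem, obtaining an invariant flag whose one-dimensional bottom piece $U_1$ carries a character, so that universality forces $U_1\subseteq V$ and one inducts on $U/U_1$. You instead work top-down on the original group: an invariant hyperplane $U'=\ker\xi$ from the dual representation, exclusion of $V\subseteq U'$ via the character on $U/U'$, and $U'\subseteq V$ by induction inside $U'$. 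The decisive simplification is your observation that on a one-dimensional $G$-invariant quotient the group acts through a character \emph{automatically} (it is just a homomorphism $G\to GL(1,\CC)=\CC^\times$), so no integration of Lie-algebra characters---hence no simple connectedness, no universal cover, no Ado---is ever needed; this eliminates the paper's entire Case 2. The one point you gloss over is precisely the one that machinery was built to handle: for a real Lie group $G$, the image $d\rho(\frg)$ is only a \emph{real} subalgebra of $\mathfrak{gl}(n,\CC)$, while Lie's theorem in its usual formulation concerns solvable Lie algebras over the algebraically closed field itself. The patch is one line: replace $d\rho(\frg)$ (or its image in the dual representation) by its complex span $d\rho(\frg)+i\,d\rho(\frg)$, which is still solvable, being a homomorphic image of $\frg_\CC$, and note that a common eigenvector for the span is a common eigenvector for $d\rho(\frg)$; alternatively, cite a version of Lie's theorem stated for real solvable algebras acting on complex vector spaces. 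With that sentence added your argument is complete, and your hyperplane-versus-line choice is otherwise just the dual of the paper's bottom-up induction.
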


\begin{proof} We prove it in two steps. In Case 1 we prove the result for simply connected solvable complex Lie groups, and in Case 2 for the general case.

{\bf Case 1.} $G$ is a complex simply connected solvable Lie group. Induction on the complex rank of $U$. If rank is $0$ there is nothing to prove. Now assume the rank is positive. Consider the action of the Lie algebra $d\rho:\frg\to\mathfrak{gl}(U)=\mathfrak{gl}(n,\CC)$. By Lie's theorem, $U$ admits a filtration $$U=U_n\supset U_{n-1}\supset \cdots \supset U_1\supset U_0=0$$ of complex subspaces invariant under $\frg$ (hence under $G$, because the connected group $G$ is generated by a neighbouhood of identity and such nbd is in the image of exp) such that $\frg$ acts on $U_i/U_{i-1}$ by characters. These characters integrate to characters on the simply connected $G$. Since the rank of $U$ is positive, $U_1\ne 0$. Since $V$ is universal and $G$ acts on $U_1$ by a character, $U_1\subset V$. Then $V/U_1$ is universal in $U/U_1$, and the induction hypothesis applies, so $V/U_1=U/U_1$.

{\bf Case 2.} The general case. First we claim that if $G$ is any connected solvable Lie group (real or complex), then there is a simply connected solvable complex Lie group $G_{sc}$ such that ${\rm Lie}(G_{sc})=\frg_\CC$. By Ado's theorem, there is an embedding of complex Lie algebras $\frg_\mathbb C\subset \mathfrak{gl}(N,\CC)$, so there is some connected complex Lie group $G'$ with $\frg_\mathbb C$ as the Lie algebra by the subgroup-subalgebra correspondence. This $G'$ has to be solvable if $\frg$ is. Now let $G_{sc}$ be the simply connected cover of $G'$. Since the central quotient $G'$ of $G_{sc}$ is solvable, $G_{sc}$ is solvable too. This proves the claim.

Given the representation $\rho$, consider $d\rho:\frg\to \mathfrak{gl}(U)$, and then $d\rho_\mathbb C=d\rho\otimes_\RR 1_\CC:\frg_\mathbb C\to \mathfrak{gl}(U)$. Then $d\rho_\mathbb C$ integrates to a representation $\rho_{sc}:G_{sc}\to GL(U)=GL(n,\CC)$. Now there is a group homomorphism $i:\tilde G\to G_{sc}$, where $\tilde G$ is the simply connected cover of $G$, and $di$ is the inclusion of $\frg$ into $\frg_\CC$. The two representations of $\tilde G$ on $U$ given by the following two compositions are equivalent.
$$\tilde G\to G\stackrel\rho\to GL(n,\CC)~~{\rm and}~~\tilde G\stackrel i\to G_{sc}\stackrel{\rho_\mathbb C}\to GL(n,\CC)$$ 
Since $V$ is universal for $G$, it is universal for $\tilde G$ and therefore for $G_{sc}$ as well. Now Case 1 applies. \end{proof}

Recall that a Levi decomposition of a connected Lie group $G$ is given by a virtual connected semisimple Lie subgroup $S\subset G$ such that $G=R\cdot S$, $R\cap S$ is discrete, where $R$ is the solvable radical of $G$. 
\begin{proposition}\label{compact-levi} Let $G$ be a compact connected Lie group, and let $G=R\cdot S$ be a Levi decomposition, where $R$ is the radical and $S$ is semisimple. Suppose $\rho:G\to GL(U)$ is a complex representation of $G$ and $V$ is a complex subspace that is universal for $G$. Then $V$ is universal for $S$ also.\end{proposition}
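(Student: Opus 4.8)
The plan is to exploit the special structure of a compact connected $G$: its solvable radical is $R=Z(G)^\circ$, the identity component of the center, which is a torus and in particular is \emph{central}, while the Levi subgroup $S=[G,G]$ is a closed semisimple subgroup with $R\cap S$ finite. Centrality of $R$ is exactly the feature that fails in the noncompact case (cf.\ Remark \ref{solv-remark}), so I expect it to carry the argument. Since $\rho(R)$ commutes with $\rho(S)$, the $R$-isotypic decomposition $U=\bigoplus_\chi U_\chi$ into character spaces $U_\chi=\{u:\rho(r)u=\chi(r)u\ \forall r\in R\}$ is preserved by all of $G$; in particular each $U_\chi$ is $S$-stable. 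Throughout I would use that a complex subspace $V$ is universal for a subgroup $K\subseteq G$ if and only if its translates cover, $\bigcup_{k\in K}\rho(k)V=U$. Writing each $g\in G$ as $g=rs$ with $\rho(rs)=\rho(s)\rho(r)$, universality of $V$ for $G$ becomes $\bigcup_{s\in S}\rho(s)\hat V=U$, where $\hat V=\bigcup_{r\in R}\rho(r)V$ is the $R$-saturation of $V$.

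The heart of the argument is a scalar-absorption trick that uses complex linearity, and this is where the hypothesis that $U$ is a complex representation enters. Fix $u\in U_\chi$ lying in a single character space. By universality for $G$ there are $r\in R$, $s\in S$ with $\rho(s)\rho(r)u\in V$; since $U_\chi$ is $S$-stable and $\rho(r)$ acts on $U_\chi$ by the single scalar $\chi(r)\in\CC^\times$, this says $\chi(r)\,\rho(s)u\in V$, whence $\rho(s)u\in V$ because $V$ is a complex subspace. Thus every $S$-orbit of a pure vector meets $V$. In particular, if $U$ is $R$-isotypic (a single character, e.g.\ whenever $U$ is $G$-irreducible, by Schur's lemma applied to the central $R$), then $\rho(r)$ is a global scalar, $\rho(rs)V=\rho(s)V$, and the covering for $G$ coincides with the covering for $S$; the proposition is immediate in that case.

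The main obstacle is the passage from pure vectors to a general $u=\sum_\chi u_\chi$: now $\rho(r)$ rescales distinct blocks by the \emph{distinct} scalars $\chi(r)$, and a complex subspace cannot absorb several independent scalars simultaneously. Equivalently, setting $\Omega=\{u:\rho(s)u\in V\text{ for some }s\in S\}$, the set $\Omega$ is closed (as $S$ is compact), $S$-invariant, stable under $\CC^\times$, and contains every $U_\chi$ by the previous paragraph, while universality for $G$ says precisely $\bigcup_{r\in R}\rho(r)\Omega=U$; what must be shown is the stronger statement $\Omega=U$, i.e.\ that $\Omega$ is already $R$-invariant. I would attack this by induction on $\dim_\CC U$ (equivalently on the number of characters $\chi$). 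Choosing one block $U_{\chi_0}$ with $G$-stable complement $U'=\bigoplus_{\chi\ne\chi_0}U_\chi$ and projecting the covering of $U$ onto $U'$ shows, by the inductive hypothesis, that the image of $V$ in $U'$ is universal for $S$; combined with the block case for $U_{\chi_0}$, this reduces the problem to controlling the purely central ambiguity living in $U_{\chi_0}$. To pin that ambiguity down I expect to use compactness of $S$ (to pass to limits of approximate solutions) together with Proposition \ref{solvable} applied to the solvable radical $R$: since any proper complex subspace fails to be universal for $R$, the saturation $\hat V$ is a proper, closed, $R$-invariant cone unless $V=U$, which constrains how $\Omega$ can fail to be $R$-invariant. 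Assembling the block-wise universality into genuine universality of $V$ for $S$ — that is, closing the gap between the pointwise covering $\bigcup_{r}\rho(r)\Omega=U$ and the desired equality $\Omega=U$ — is the step I expect to be the crux, and the one where the compactness of $G$ and the complex structure must both be used in an essential way.
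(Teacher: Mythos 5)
Your first two paragraphs are, in substance, exactly the paper's proof. The paper uses compactness to place $R$ in the center of $G$, decomposes $U=\oplus_{\alpha\in\Phi}U_\alpha$ into $R$-weight spaces (each $G$-invariant by centrality), observes that consequently each $V\cap U_\alpha$ is universal in $U_\alpha$ for $G$, and then runs your scalar-absorption computation verbatim: writing $g=sr$, for $v\in U_\alpha$ one has $V\cap U_\alpha\ni\rho(g)v=\rho(s)\rho(r)v=\alpha(r)\rho(s)v$, whence $\rho(s)v\in V\cap U_\alpha$ because $V$ is a \emph{complex} subspace and $\alpha(r)\in\CC^\times$. Where you and the paper part ways is the step you call the crux: the paper simply declares that the proposition ``will follow if we can show that $V\cap U_\alpha$ are universal in $U_\alpha$ for $S$,'' and its proof ends with the weight-vector computation; no argument is given for passing from vectors lying in a single $U_\alpha$ to a general $u=\sum_\alpha u_\alpha$. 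So the portion of your proposal that is actually proved coincides with the entirety of the paper's displayed argument, and the portion you leave open is precisely the step the paper treats as immediate.

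As a standalone proof, then, your proposal has a genuine gap, and none of the repairs you sketch closes it. Your skepticism about the reduction is not misplaced: blockwise $S$-universality alone does not formally imply $S$-universality of $V$ in $U$ --- for $S=SU(2)$ acting diagonally on $\CC^2\oplus\CC^2$ and $V=\CC e_1\oplus\CC e_1$, each block is universal in its copy of $\CC^2$ (any nonzero vector can be rotated into any line), yet no single $A\in SU(2)$ sends $(e_1,e_2)$ into $V$; this is exactly the configuration the paper itself exploits in Remark \ref{solv-remark}$(b)$. Hence any honest completion must reuse the global $G$-universality of $V$, not merely its blockwise shadow. But your induction-by-projection scheme reproduces the very simultaneity problem it is meant to solve: universality of the image of $V$ in $U'=\oplus_{\chi\neq\chi_0}U_\chi$ together with the block case for $U_{\chi_0}$ produces two unrelated elements of $S$, and nothing forces a common one. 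Likewise the appeal to Proposition \ref{solvable} is a category error: $\hat V=\bigcup_{r\in R}\rho(r)V$ is a union of subspaces, not a complex linear subspace, so that proposition (which classifies universal \emph{complex subspaces} for solvable groups) says nothing about it, and your Baire-type observations about $\Omega$ (closed, $S$-stable, $\CC^\times$-stable, with $R$-translates covering $U$) fall short of the needed equality $\Omega=U$. In sum: relative to the paper you have reconstructed the whole written argument and correctly flagged its one-sentence reduction as the delicate point, but you have not supplied the missing step, so the proposal does not yet prove the statement.
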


\begin{proof} 
  Since $G$ is compact, $R$ is central in $G$. Let $\Phi$ be the set of weights in the weight space decomposition of $U$ for the torus $R$. Then we have
	$$U=\oplus_{\alpha\in \Phi}U_\alpha,~{\rm where}~U_\alpha:=\{u\in U|~ \rho(g) u=\alpha(g)u,~\forall~g\in R\}.$$
	   Since $R$ is central in $G$, each $U_\alpha$ is $G$-invariant. Therefore, for each $\alpha\in \Phi$, $V\cap U_\alpha$ is universal in $U_\alpha$ for $G$.
	    Now our proposition will follow if we can show that $V\cap U_\alpha$ are universal in $U_\alpha$ for $S$. Let $v\in U_\alpha$. Then there is some $g\in G$ such that $\rho(g)v\in V\cap U_\alpha$. Write $g=sr$, $r\in R$, $s\in S$. Then $V\cap U_\alpha\ni \rho(g)v= \rho(s)\rho(r)v=\alpha(r)\rho(s)v$, which means $\rho(s)v\in V\cap U_\alpha$.\end{proof}

\begin{remark} \label{solv-remark}
	$(a)$ Propositions \ref{solvable} and \ref{compact-levi} are not always true for real representations. For example, take $G=U(1)$ acting on $U=\RR^2$ by rotation. Then any line through the origin is universal for $G$.
	
	$(b)$ Proposition \ref{compact-levi} is not valid if $G$ is not compact. We give an example to demonstrate this. Take \[G=\left\{\begin{pmatrix} A & B\\ 0 & A\end{pmatrix}\in SL(4,\CC): A\in SU(2), B\in \mathfrak{gl}(2,\CC).\right\}\]There is an isomorphism \[f:SU(2)\ltimes \mathfrak{gl}(2,\CC)\isomor G: (A,B)\mapsto \begin{pmatrix} A & AB\\ 0 & A\end{pmatrix}.\] Here the semidirect product is given by $(A',B')\cdot (A,B)=(A'A, A\inv B'A+B)$. 
	
	The subgroup $SU(2)$ sitting diagonally is a Levi component $S$, while the solvable radical is $R=\left\{\begin{pmatrix}I_2 & *\\ 0 & I_2\end{pmatrix}\right\}\cong \mathfrak{gl}(2,\CC)$, which is abelian. Let $U=\CC^4$ and consider the restriction of the defining representation of $SL(4,\CC)$ on $U$ to $G$. Let $V$ be the set of vectors $\{(v,w)\in \CC^4: v\in \CC e_1,w\in \CC e_1\}$, where $e_1=(1,0)\in \CC^2$. Then $V$ is universal for $G$. Indeed, if we take any $(v,w)\in \CC^4$, with $w\ne 0$, then we have some $A\in SU(2)$, $z\in \CC$ such that $Aw=ze_1$. Fix such an $A$. We claim that there is some $B\in\mathfrak{gl}(2,\CC)$, $z'\in\CC$ such that $Av+Bw=z'e_1$. If $Av=\alpha e_1$ for some $\alpha\in \CC$, we can find $B\in SU(2)\subset \mathfrak{gl}(2,\CC)$, $\beta\in \CC$ such that $Bw=\beta e_1$, so that $Av+Bw=(\alpha+\beta)e_1$. If $Av\not\in \CC e_1$, then $\{Av,e_1\}$ is a basis of $\CC^2$. Since $w\ne 0$, there is some $B_1\in GL(2,\CC)\subset \mathfrak{gl}(2,\CC)$ such that $B_1w=-\gamma Av+\delta e_1$, where $\gamma,\delta\in \CC$, $\gamma\ne 0$. If $B=\gamma\inv B_1$, then $Bw+Av=\delta\gamma\inv e_1$, which proves the claim. So, \[\begin{pmatrix} A & B\\ 0 & A\end{pmatrix}\begin{pmatrix} v\\ w\end{pmatrix}=\begin{pmatrix} z'e_1\\ ze_1\end{pmatrix}.\] If $w=0$, $v\ne 0$, then there are $A\in SU(2)$, $z\in \CC$ such that $Av=ze_1$. Thus \[\begin{pmatrix} A & 0\\ 0 & A\end{pmatrix}\begin{pmatrix} v\\ 0\end{pmatrix}=\begin{pmatrix} ze_1\\ 0\end{pmatrix}.\]
	Now $V$ is not universal for the Levi part $S$. For example, if $v=e_1$ and $w=e_2=(0,1)$ then there is no $A\in SU(2)$ such that $Av,Aw\in \CC e_1$. 
	\end{remark}

\vskip 2em
\parindent=0pt

{\small Saurav Bhaumik 
	
		Department of Mathematics, 
		
	Indian Institute of Technology Bombay, 
	
	Powai, Mumbai 400076, India
	
	\texttt{saurav@math.iitb.ac.in}
	\vskip 1em
	
	Arunava Mandal
	
	Department of Mathematical Sciences, 
	
	Indian Institute of Science Education and Research Mohali, 
	
	Punjab 140306, India
	
	\texttt{a.arunavamandal@gmail.com}

\end{document}